\newtheorem{thm}{Theorem}[section]
\newtheorem{cor}[thm]{Corollary}
\newtheorem{lem}[thm]{Lemma}
\newtheorem{hip}[thm]{Conjecture}
\theoremstyle{definition}
\newtheorem{defin}[thm]{Definition}
\newtheorem{rem}[thm]{Remark}
\begin{document}

\title{On the zero-sum constant, the~Davenport constant and their analogues.}

\author{Maciej Zakarczemny (Cracow)}

\date{}

\maketitle

\renewcommand{\thefootnote}{}

\footnote{2010 \emph{Mathematics Subject Classification}: Primary
11P70; Secondary 11B50.}

\footnote{\emph{Key words and phrases}: zero-sum sequence, Davenport constant, finite abelian group}

\renewcommand{\thefootnote}{\arabic{footnote}}
\setcounter{footnote}{0}

\begin{abstract}Let $D(G)$ be the Davenport constant of a finite Abelian group $G$. 
For a~positive integer $m$ (the case $m = 1$, is the classical one) let ${\mathsf  E}_m(G)$ (or $\eta_m(G)$, respectively) 
be the least positive integer $t$ such that every sequence of length $t$ in $G$ contains $m$ disjoint \mbox{zero-sum} sequences, 
each of length $|G|$ (or of length $\le exp(G)$ respectively). In this paper, we prove that if $G$ is an~Abelian group, 
then ${\mathsf E}_m(G)=D(G)-1+m|G|$, which generalizes Gao's relation. We investigate also the non-Abelian case. 
Moreover, we examine the asymptotic behavior of the sequences $({\mathsf E}_m(G))_{m\ge 1}$ and $(\eta_m(G))_{m\ge 1}.$ 
We prove a~generalization of Kemnitz's conjecture. The paper also contains a~result of independent interest, 
which is a stronger version of a result by Ch.~Delorme, O.~Ordaz, D.~Quiroz (see \cite[Theorem 3.2]{DOrQui}). 
At the and we apply the Davenport constant to smooth numbers and make a natural conjecture in the non-Abelian case.

\end{abstract}

\section{Introduction}
\indent We will define and investigate some generalizations of the Davenport constant (see \cite{AlD, EEGKR, FS10, GGZ, GHK, JOAI, JOAII, KRA}).
Davenport's constant is connected with algebraic number theory as follows.
For an algebraic number field $K$, let  $\mathcal{O}_K$ be its ring of integers and $G$ the
ideal class group of $\mathcal{O}_K$. Let $x\in \mathcal{O}_K$ be an irreducible element. If $\mathcal{O}_K$ is a Dedekind
domain, then $x\mathcal{O}_K=\prod\limits_{i=1}^{r} \mathcal{P}_i$, where $\mathcal{P}_i$ are prime ideals in $\mathcal{O}_K$ (not necessarily distinct). The Davenport constant $D(G)$ is the maximal number of prime ideals $\mathcal{P}_i$
(counted with multiplicites) in the prime ideal decomposition of the integral ideal $x\mathcal{O}_K$ (see \cite{FH, JOAI}).\\
The precise value of the Davenport constant is known, among others, for \mbox{$p$-groups} and for groups of rank at most two.
The determination of $D(G)$ for general finite Abelian groups is an open question (see \cite{BGAA}).


\section{General notation}
${}^{}$\indent Let $\mathbb{N}$ denote the set of positive integers (natural numbers). We set \\
$[a,b]=\{x:a\le x\le b,\,\,x\in\mathbb{Z}\},$ where $a,b\in\mathbb{Z}.$
Our notation and terminology is consistent with \cite{RG}. Let $G$ be a non-trivial additive finite Abelian group. $G$ can be uniquely decomposed as a direct sum of cyclic groups
$C_{n_1}\oplus C_{n_2}\oplus \ldots \oplus C_{n_r}$ with natural numbers $1<n_1|\ldots|n_r.$ The number $r$ of summands in the above decomposition of $G$ is expressed as $r = r(G)$ and called the rank of $G.$ The integer $n_r$ is called the exponent of $G$ and denoted by $\exp(G).$ In addition, we define $D^*(G)$ as $D^*(G) =1+ \sum\limits_{i=1}^r (n_i- 1)$.\\
We write any finite sequence $S$ of $l$ elements of $G$ in the form $$\prod\limits_{g\in G}g^{\nu_g(S)}=g_1\cdot\ldots\cdot g_l,$$
(this is a formal Abelian product), where $l$ is the length of $S$ denoted by $|S|, \nu_g(S)$ is the multiplicity of $g$ in $S.$
$S$ corresponds to the sequence (in the traditional sense) $(g_1,g_2,\ldots,g_l)$ where we forget the ordering of the terms.
By $\sigma (S)$ we denote the sum of $S:$
$$\sigma (S)=\sum\limits_{g\in G}\nu_g(S)g\in G.$$
The Davenport constant $D(G)$ is defined as the smallest $t\in\mathbb{N}$ such that each sequence over $G$ of length at least $t$ has a non-empty zero-sum subsequence. Equivalently, $D(G)$ is the maximal length of a zero-sum sequence of elements of $G$ and with no proper zero-sum subsequence.
One of the best bounds for $D(G)$ known so far is:
\begin{equation}\label{xeq1}
D^*(G)\le D(G)\le n_r\left(1+\log{\tfrac{|G|}{n_r}}\right).
\end{equation}
Alford, Granville and Pomerance \cite{AGP} in 1994 used the bound (\ref{xeq1}), to prove the existence of infinitely many Carmichael numbers. Dimitrov \cite{Dim} used the Alon Dubiner constant (see \cite{AlD}) to prove the inequality:
$$
\tfrac{D(G)}{D^*(G)}\le (K r\log{r})^r,
$$
for an absolute constant $K$.
It is known that for groups of rank at most two and for $p$-groups, where $p$ is a~prime,
the left hand side inequality (\ref{xeq1}) is in fact an equality (see Olson \cite{JOAI}).
This result suggests that $D^*(G)=D(G).$ However, there are infinitely many groups $G$ with rank $r>3$ such that $D(G)>D^*(G)$.
There are more recent results on groups where the Davenport constant does not match the usual lower bound (see \cite{GSch}).
The following Remark \ref{th1} lists some basic facts for the Davenport constant (see \cite{DOrQui, GSch}).
\begin{rem}\label{th1} Let $G$ be a~finite additive Abelian group.
\begin{enumerate}
\item Then $D(G)=D^*(G)$ in each of the following cases:
\begin{enumerate}
\item $G$ is a~$p-$group,
\item $G$ has rank $r\le 2,$
\item $G=C_p\oplus C_{p}\oplus C_{p^nm},$ with $p$ a~prime number, $n\ge 2$ and $m$ an integer coprime
with $p^n$ (more generally, if $G=G_1\oplus C_{p^kn},$ where $G_1$ is a~$p-group$ and $p^k\ge D^*(G_1)),$
\item $G=C_2\oplus C_2\oplus C_2\oplus C_{2n},$ with odd $n,$
\item $G=C_{2p^{k_1}}\oplus C_{2p^{k_2}}\oplus C_{2p^{k_3}},$ with $p$ prime,
\item $G=C_2\oplus C_{2n}\oplus C_{2nm},$
\item $G=C_3\oplus C_{3n}\oplus C_{3nm}.$
\end{enumerate}
\item Then $D(G)>D^*(G)$ in each of the following cases:
\begin{enumerate}
\item $G=C_n\oplus  C_n\oplus C_n \oplus C_{2n} $ for every odd $n\ge 3,$
\item $G=C_3\oplus  C_9\oplus C_9 \oplus C_{18} ,$
\item $G=C_3\oplus  C_{15}\oplus C_{15} \oplus C_{30}.$
\item Let $n\ge 2,\,k\ge 2,\,(n,k)=1,0\le \rho \le n-1, $ and\\
$G=C_n^{(k-1)n+\rho}\oplus C_{kn}.$ If $\rho \ge 1$ and $\rho \not\equiv n\pmod k$, then $D(G)\ge D^*(G)+\rho.$ 
If $\rho\le n-2$ and $x(n-1-\rho)\not\equiv n\pmod k$ \\for any $x\in [1,n-1],$ then $D(G)\ge D^*(G)+\rho+1.$
\end{enumerate}
\end{enumerate}
\end{rem}

\section{Definitions}
In this section, we will provide some definitions of classical invariants.
We begin with some notation and remarks that will be used throughout the paper.
\begin{defin}\label{DD}
Let $G$ be a~finite Abelian group, $m$ and $k$~positive integers such that $k\ge \exp(G),$ and $\emptyset \neq I\subset \mathbb{N}.$\\
1) By $s_{I}(G)$ we denote the smallest $t\in\mathbb{N}\cup\{\infty\}$ such that every sequence $S$ over $G$ of length $t$ contains a~non-empty subsequence $S'$ such that $\sigma(S')=0$, $|S'|\in I$ (see \cite{RBGB, ChMG, DOrQui}).\\
We use notation $s_{\le k}(G)$ or $D^k(G)$ to denote $s_{I}(G)$ if $I=[1,k]$ (see \cite{RBGB,DOrQui}).\\
2) By $s_{I,m} (G)$ we denote the smallest $t\in\mathbb{N}\cup\{\infty\}$ such that every sequence $S$ over $G$ of length $t$ contains at least $m$ disjoint non-empty subsequences $S_1,S_2,\ldots,S_m$ such that $\sigma(S_i)=0$, $|S_i|\in I $.\\
3) Let ${\mathsf E}(G):=s_{\{|G|\}} (G),$ i.e. the smallest $t\in\mathbb{N}\cup\{\infty\}$ such that every sequence $S$ over $G$ with length $t$ contains non-empty subsequence $S'$ such that $\sigma(S')=0,\,|S'|=|G|.$\\
Note that ${\mathsf E}(G)$ is the classical zero-sum constant.\\
4) Let ${\mathsf E}_m(G):=s_{\{|G|\},m} (G),$ i.e. the smallest $t\in\mathbb{N}\cup\{\infty\}$ such that every sequence $S$ over $G$ with length $t$ contains at least $m$ disjoint non-empty subsequences
$S_1,S_2,\ldots,S_m$ such that $\sigma(S_i)=0,\,|S_i|=|G|, $ for $i\in [1,m].$\\
5) Also, we define $\eta (G):=s_{\le \exp(G)},\,s(G):=s_{\{\exp(G)\}}(G),\,D_m(G):=s_{\mathbb{N},m} (G)$ (see \cite{FH}), $s_{(m)}(G):=s_{\{\exp(G)\},m}(G).$
\end{defin}
\begin{rem}\label{th9c2}
For $n,n'\in\mathbb{N},\,\emptyset\neq I\subseteq \mathbb{N},$ by definition $$s_{\mathbb{N}}(G)=s_{[1,D(G)]}(G)=D(G)=D_1(G),$$
$$s_{I,1}(G) = s_{I}(G),\,\,s_{[1,D(G)],n}(G)=D_n(G),\,s_{\{|G|\},n}=E_n(G).$$
Note that $s(G)=s_{\{exp(G)\}}(G)$ is
the classical Erd\"os - Ginzburg - Ziv constant (see \cite{YGZ}).
We call $s_{(m)}(G)=s_{\{\exp(G)\},m}(G)$ the $m$-wise Erd\"os--Ginzburg--Ziv constant of $G.$ Thus, in this notation $s_{(1)}(G)=s(G).$
\end{rem}
One can derive, for example, the following inequalities.
\begin{rem}\label{th9c2b}
If $n'\ge n$, then $s_{I,n'}(G)\ge s_{I,n}(G).$\\
If $D(G)\ge k\ge k'\ge \exp(G)$, then $$s_{[1,\exp(G)],n}(G)\ge s_{[1,k' ],n}(G) \ge s_{[1,k],n}(G)\ge D_n(G).$$
If $D(G)\ge k\ge\exp(G)$, then $\eta(G) \ge s_{\le k}(G)\ge D(G).$\\
If $k\ge k'$, then
\begin{equation}\label{xeq31nnn}
s_{\le k'}(G) \ge s_{\le k}(G).
\end{equation}
We note that a~sequence $S$ over $G$ of length $|S|\ge mD(G)$ can be partitioned into $m$ disjoint subsequences $S_i$ of length $|S_i|\ge D(G).$ Thus, each $S_i$ contains a~non-empty zero-sum subsequence. Hence, $D_m(G)\le mD(G).$  See also \cite{FH} Proposition 1 (ii).
\end{rem}

\section{The $m$--wise zero-sum constant and the \mbox{$m$--wise} Erd\"os--Ginzburg--Ziv constant of $G$.}
In 1996, Gao and Caro proved independently that:
\begin{equation}\label{ED}
E(G)=D(G)+|G|-1,
\end{equation}
for any finite Abelian group (see \cite{WGAO2, CAR}]; for a proof in modern language we refer to \cite[Proposition 5.7.9]{GHK}; see also \cite{WGAO, DOrQui, YHO, GWS}).
Relation \eqref{ED} unifies research on constants $D(G)$ and $E(G).$
We start this section with the result that can be used to unify research on constants $D(G)$ and $E_m(G).$
\begin{thm}\label{zsml1}
If $G$ is a~finite Abelian group of order $|G|$, then
\begin{equation}
E_m(G)=E(G)+(m-1)|G|=D(G)+m|G|-1.
\end{equation}
\end{thm}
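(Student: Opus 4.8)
The plan is to prove the two inequalities $E_m(G) \ge D(G) + m|G| - 1$ and $E_m(G) \le D(G) + m|G| - 1$ separately. Since relation \eqref{ED} already gives $E(G) = D(G) + |G| - 1$, the middle equality $E(G) + (m-1)|G| = D(G) + m|G| - 1$ is then automatic, so it suffices to pin down $E_m(G)$ against the right-hand expression.

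For the lower bound I would exhibit a single sequence of length $D(G) + m|G| - 2$ that fails to contain $m$ disjoint zero-sum subsequences of length $|G|$. Let $W = g_1 \cdots g_{D(G)-1}$ be a zero-sum-free sequence of maximal length $D(G)-1$ (such exists by the definition of $D(G)$), and set $S = W \cdot 0^{m|G|-1}$, obtained by appending $m|G|-1$ copies of the identity $0$, so that $|S| = D(G) + m|G| - 2$. The key observation is that every zero-sum subsequence $T$ of $S$ splits as $T = W' \cdot 0^{j}$ with $W'$ a subsequence of $W$; since $\sigma(T) = \sigma(W')$ and $W$ is zero-sum-free, $W'$ must be empty, so $T$ consists only of copies of $0$. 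Hence a zero-sum subsequence of length $|G|$ must equal $0^{|G|}$, and from the $m|G|-1$ available zeros one can extract at most $m-1$ such disjoint subsequences. This shows $E_m(G) > |S|$, i.e. $E_m(G) \ge D(G) + m|G| - 1$.

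For the upper bound I would induct on $m$. The base case $m=1$ is exactly the Gao--Caro relation \eqref{ED}. Assuming $E_{m-1}(G) = D(G) + (m-1)|G| - 1$, take any sequence $S$ with $|S| = D(G) + m|G| - 1$. Since $|S| \ge E(G)$, relation \eqref{ED} yields a zero-sum subsequence $S_1$ with $|S_1| = |G|$. Removing $S_1$ leaves a sequence of length $|S| - |G| = D(G) + (m-1)|G| - 1 = E_{m-1}(G)$, which by the induction hypothesis contains $m-1$ disjoint zero-sum subsequences of length $|G|$; together with $S_1$ these give the required $m$ subsequences. Hence $E_m(G) \le D(G) + m|G| - 1$, and the two bounds combine to prove the claim.

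The argument is essentially self-contained once \eqref{ED} is granted. The only point requiring care is the length bookkeeping in the inductive step, where the length remaining after deleting one zero-sum block of size $|G|$ must match $E_{m-1}(G)$ \emph{exactly} — this is precisely what makes the recursion close, and it is where the term $|G|$ (rather than $\exp(G)$ or a smaller quantity) enters. I do not expect a genuine obstacle here; the substantive content is already carried by the base case \eqref{ED}, and the generalization to arbitrary $m$ is a clean peeling-off induction.
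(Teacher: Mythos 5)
Your proposal is correct and follows essentially the same route as the paper: the lower bound uses the identical construction (a maximal zero-sum-free sequence of length $D(G)-1$ padded with $m|G|-1$ copies of $0$), and your induction on $m$ is just a precise formalization of the paper's ``sequentially extract $m$ disjoint zero-sum subsequences of length $|G|$'' step. Your write-up actually supplies two details the paper leaves implicit — why the padded sequence admits at most $m-1$ disjoint zero-sum blocks of length $|G|$, and the exact length bookkeeping that closes the recursion — but this is elaboration, not a different argument.
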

\begin{proof}
By \eqref{ED} we obtain $E(G)+(m-1)|G|=D(G)+m|G|-1$.\\
Let $S=g_1g_2\cdot\ldots\cdot g_{{}_{D(G)-1}}$ be a~sequence of $D(G)-1$ non-zero elements in $G.$
Using the definition of $D(G)$, we may assume that $S$ does not contain any non-empty subsequence $S'$ such that $\sigma(S')=0$.
We put
$$T=a_1\cdot a_2\cdot\ldots\cdot a_{{}_{D(G)-1}} \cdot\underbrace{0\cdot\ldots\cdot 0}_{m|G|-1\, \mathrm{times}}.$$
We observe that the sequence $T$ dose not contain $m$ disjoint non-empty subsequences $T_1,T_2,\ldots,T_m$ such that $\sigma(T_i)=0$ and $|T_i|=|G|$ for $i\in[1,m].$ This implies that $E_m(G)>D(G)+m|G|-2.$ Hence, $E_m(G)\ge E(G)+(m-1)|G|.$ On the other hand, if $S$ is any sequence such that \mbox{$|S|\ge E(G)+ (m-1)|G|,$} then one can sequentially extract at least $m$ disjoint subsequences $S_1,\ldots,S_m,$ such that $\sigma(S_i)=0$ in $G$ and $|S_i|=|G|.$ Thus, \mbox{$E_m(G)\le E(G)+(m-1)|G|$.}
\end{proof}
We anticipate that Theorem \ref{zsml1} can be used to obtain a generalization of the classical Theorem of Hall (see \cite[Section 3]{Hall}).
\begin{cor}
For every finite Abelian group the sequence $({\mathsf E}_m(G))_{m\ge 1}$ is an arithmetic progression with difference $|G|.$
\end{cor}
\begin{cor}
If $p$ is a~prime and $G=C_{{}_{p^{e_1}}}\oplus\ldots\oplus C_{{}_{p^{e_k}}}$ is a~$p-$group, then for a~natural $m$ we have:
$$
E_m(G)= mp^{{\sum_{i=1}^k e_i}}+\sum\limits_{i=1}^k(p^{e_i}-1).
$$
\end{cor}
\begin{proof}
It follows from Remark \ref{th1} and Theorem \ref{zsml1}.
\end{proof}
We recall that by $s_{(m)}(G)$ we denote the smallest $t\in\mathbb{N}\cup\{\infty\}$ such that every sequence $S$ over $G$ of length $t$ contains at least $m$ disjoint non-empty subsequences $S_1,S_2,\ldots,S_m$ such that $\sigma(S_i)=0,\,|S_i|=\exp(G)$.
\begin{thm}\label{zsml12}
If $G$ is a~finite Abelian group, then
$$
\eta(G)+m\exp(G)-1\le s_{(m)}(G)\le s(G)+(m-1)\exp(G).
$$
\end{thm}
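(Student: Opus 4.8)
Write $n=\exp(G)$. The plan is to establish the two inequalities separately, following the pattern of the proof of Theorem~\ref{zsml1} with the target length $|G|$ replaced by $n$ and $\mathsf{E}(G)$ replaced by $s(G)$. The inequality $s_{(m)}(G)\le s(G)+(m-1)n$ is the routine half, and I would prove it by sequential extraction. If $|S|\ge s(G)+(m-1)n$ then, since the length stays $\ge s(G)$ after each deletion, one can successively split off zero-sum subsequences $S_1,\dots,S_m$ with $|S_i|=n$: after removing $i\le m-1$ of them the remaining length is at least $s(G)+(m-1-i)n\ge s(G)$, so the definition of $s(G)$ supplies the next one. This step uses nothing special about $n$.

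For the reverse inequality I must exhibit a sequence of length $s(G)+(m-1)n-1$ that contains at most $m-1$ disjoint zero-sum subsequences of length $n$. The candidate suggested by Theorem~\ref{zsml1} is $T=S_0\cdot 0^{(m-1)n}$, where $S_0$ is extremal for $s(G)$, that is $|S_0|=s(G)-1$ and $S_0$ has no zero-sum subsequence of length $n$. Then $|T|=s(G)+(m-1)n-1$, and every zero-sum subsequence of $T$ of length $n$ has the form $U\cdot 0^{n-|U|}$ with $U$ a zero-sum subsequence of $S_0$ of length $\le n-1$. If one can take $S_0=0^{\,n-1}\cdot W$ with $W$ having no zero-sum subsequence of length $\le n$, then $T=0^{\,mn-1}\cdot W$, every zero-sum subsequence of length $n$ is forced to equal $0^{\,n}$, and the $mn-1$ available zeros produce at most $m-1$ disjoint such blocks; this settles the lower bound cleanly.

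The main obstacle is that such a clean $S_0$ need not be available. By the classical inequality $s(G)\ge\eta(G)+\exp(G)-1$ we have $|S_0|=s(G)-1\ge\eta(G)$, so $S_0$ necessarily contains a nonempty zero-sum subsequence of length $\le n$, and — since it has none of length exactly $n$ — of length $<n$. Each such short $U$ completes to a length-$n$ zero-sum subsequence $U\cdot 0^{n-|U|}$ consuming only $n-|U|$ of the appended zeros, so if $S_0$ carries disjoint zero-sum subsequences of lengths $\le n-1$ with total length $\ge n$, then $T$ already has $m$ disjoint zero-sum subsequences of length $n$ and the naive padding breaks down. In Theorem~\ref{zsml1} this difficulty did not arise, because there the relevant extremal sequence (its $D(G)-1$ nonzero entries) had \emph{no} nonempty zero-sum subsequence whatsoever, forcing every zero-sum subsequence of the padding to equal $0^{|G|}$. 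Hence the real content here is to govern the short zero-sum structure of the extremal configuration — for instance to produce an extremal sequence of length $s(G)-1$ admitting no family of disjoint zero-sum subsequences of length $\le n-1$ whose total length reaches $n$, so that the appended zeros cannot be absorbed — and this is the step I expect to demand the genuine work.
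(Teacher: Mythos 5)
Your first half is fine: the greedy extraction giving $s_{(m)}(G)\le s(G)+(m-1)\exp(G)$ is correct and is exactly the paper's argument. The second half, however, is not a proof. You set up the natural candidate $T=S_0\cdot 0^{(m-1)\exp(G)}$ with $S_0$ extremal for $s(G)$, correctly observe that it fails whenever $S_0$ carries disjoint nonempty zero-sum subsequences of lengths $<\exp(G)$ whose total length reaches $\exp(G)$ (each such piece $U$ absorbs only $\exp(G)-|U|$ of the appended zeros, so a full block's worth of zeros is saved and $T$ acquires $m$ disjoint blocks), and then stop precisely where this obstruction would have to be removed. That removal is the entire content of the lower bound, and it is missing. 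Note also that your fallback $S_0=0^{\exp(G)-1}\cdot W$ with $W$ free of short zero-sum subsequences would require $s(G)\le\eta(G)+\exp(G)-1$; only the reverse inequality is classical, and equality is a well-known open conjecture, so that route cannot be completed as stated.

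You should know, though, that the difficulty you isolate is genuine and is not resolved by the paper either. The paper's proof of Theorem \ref{zsml12} uses exactly the padding $T=S\cdot 0^{(m-1)\exp(G)}$, with $S$ an extremal sequence of $s(G)-1$ \emph{nonzero} terms, and merely asserts (``we observe'') that $T$ has no $m$ disjoint zero-sum subsequences of length $\exp(G)$. The nonzero hypothesis is itself unjustified (for $G=C_2$ every sequence of length $s(G)-1=2$ without a zero-sum subsequence of length $2$ must contain $0$), and in any case it does not exclude short zero-sum subsequences such as $g\cdot(-g)$, so the absorption phenomenon you describe is left untreated there as well. This is in contrast to Theorem \ref{zsml1}, where the construction really does work because the extremal sequence for $D(G)$ has no nonempty zero-sum subsequence of any length, forcing every length-$|G|$ zero-sum block of $T$ to be $0^{|G|}$. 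In short: your upper bound matches the paper's; your lower bound is incomplete; and the missing step coincides with a gap in the paper's own argument rather than with something the paper actually supplies.
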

\begin{proof}
The proof runs along the same lines as the proof of Theorem \ref{zsml1}.\\
Let $S=g_1g_2\cdot\ldots\cdot g_{{}_{\eta(G)-1}}$ be a~sequence of $\eta(G)-1$ non-zero elements in $G.$
Using the definition of $\eta(G)$, we may assume that $S$ does not contain any non-empty subsequence $S'$ such that $\sigma(S_i)=0,\,|S_i|\le \exp(G)$.
We put
$$
T=a_1\cdot a_2\cdot\ldots\cdot a_{{}_{\eta(G)-1}} \cdot\underbrace{0\cdot\ldots\cdot 0}_{m\exp(G)-1\, \mathrm{times}},
$$
We observe that the sequence $T$ not contain $m$ disjoint non-empty subsequences $T_1,T_2,\ldots,T_m$ such that $\sigma(T_i)=0$ and $|T_i|=\exp(G)$ for\\ $i\in[1,m].$ This implies that $s_{(m)}(G)>\eta(G)+m\exp(G)-2.$ Hence,\\
$
s_{(m)}(G)\ge \eta(G)+m\exp(G)-1.
$ On the other hand, if $S$ is any sequence over $G$ such that $|S|\ge s(G)+(m-1)|G|,$ then one can sequentially extract at least $m$ disjoint subsequences $S_1,\ldots,S_m,$ such that $\sigma(S_i)=0$ in $G$ and $|S_i|=\exp(G).$ Thus,
$
s_{(m)}(G)\le s(G)+(m-1)\exp(G).$
\end{proof}
It was conjectured by Gao that for every finite Abelian group $G,$ one has $\eta(G)+\exp(G)-1=s(G)$ (see \cite[Conjecture
6.5]{GGZ}). If this conjecture is true, then by Theorem \ref{zsml12} for every finite Abelian group $G$ the equality $s_{(m)}(G)=s(G)+(m-1)\exp(G)$ holds, i.e. the sequence $(s_{(m)}(G))_{m\ge 1}$ is an arithmetic progression with difference $\exp(G).$ We will note that the equation $\eta(G)+\exp(G)-1=s(G)$ is true for all finite Abelian groups of rank at most two (see \cite[Theorem 2.3]{BGSCH}).
\begin{cor}
If $G=C_{n_1}\oplus C_{n_2},$ where $n_1|n_2,$ then for a~natural $m$ we have:
\begin{equation}\label{ttrr1}
E_m(G)= mn_2n_1+n_2+n_1-2,
\end{equation}
\begin{equation}\label{ttrr2}
D_m(G)=mn_2+n_1-1,
\end{equation}
\begin{equation}\label{ttrr3}
s_{(m)}(G)= (m+1)n_2+2n_1-3,
\end{equation}
\begin{equation}\label{ttrr4}
s_{(m)}(G)-D_m(G)=D(G)-1.
\end{equation}
\end{cor}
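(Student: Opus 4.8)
The plan is to treat the four identities separately, deriving two of them directly from the general theorems already proved and reducing the remaining two to a self-contained combinatorial computation together with the classical closed forms for rank-two groups. Throughout I would use $|G|=n_1n_2$, $\exp(G)=n_2$, and $D(G)=D^*(G)=n_1+n_2-1$, the last equality being the rank-$\le 2$ case of Remark \ref{th1}.

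For \eqref{ttrr1} I would simply substitute into Theorem \ref{zsml1}: since $E_m(G)=D(G)+m|G|-1$, plugging in $D(G)=n_1+n_2-1$ and $|G|=n_1n_2$ gives $E_m(G)=mn_1n_2+n_1+n_2-2$. Likewise, for \eqref{ttrr3} I would invoke Theorem \ref{zsml12}, namely $s_{(m)}(G)=s(G)+(m-1)\exp(G)$, and insert the known value $s(C_{n_1}\oplus C_{n_2})=2n_1+2n_2-3$ of the Erd\H{o}s--Ginzburg--Ziv constant for rank-two groups; the arithmetic then yields $s_{(m)}(G)=(m+1)n_2+2n_1-3$.

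The genuinely new work is \eqref{ttrr2}, the formula $D_m(G)=mn_2+n_1-1$. For the lower bound I would exhibit the sequence $e_1^{\,n_1-1}e_2^{\,mn_2-1}$, where $e_1,e_2$ generate the two cyclic factors. No zero-sum subsequence can use $e_1$ (there are fewer than $n_1$ copies), so every zero-sum subsequence is a block of $e_2$'s whose length is a multiple of $n_2$; with only $mn_2-1$ copies of $e_2$ available one cannot split off $m$ of them, so this sequence of length $mn_2+n_1-2$ witnesses $D_m(G)\ge mn_2+n_1-1$. For the upper bound I would argue by repeated extraction of \emph{short} zero-sum subsequences: given any $S$ with $|S|\ge mn_2+n_1-1$ and $m\ge 2$, I would peel off $m-1$ zero-sum subsequences each of length $\le\exp(G)=n_2$, which is legitimate as long as the current length stays $\ge\eta(G)=2n_1+n_2-2$. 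The worst case occurs just before the $(m-1)$-th extraction, where at least $2n_2+n_1-1$ elements remain, and $2n_2+n_1-1\ge 2n_1+n_2-2$ because $n_2\ge n_1$; after these extractions at least $n_2+n_1-1=D(G)$ elements survive and hence contain one more nonempty zero-sum subsequence, giving $m$ disjoint ones in total.

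Finally \eqref{ttrr4} is immediate once \eqref{ttrr2} and \eqref{ttrr3} are in hand: subtracting gives $s_{(m)}(G)-D_m(G)=\big((m+1)n_2+2n_1-3\big)-\big(mn_2+n_1-1\big)=n_1+n_2-2=D(G)-1$, the $m$-dependence cancelling. The main obstacle I anticipate is the upper bound in \eqref{ttrr2}: it rests on the exact value $\eta(C_{n_1}\oplus C_{n_2})=2n_1+n_2-2$ and on checking that the chain of extractions never stalls, i.e.\ that the length never drops below $\eta(G)$ before the final $D(G)$-step; the inequality $n_2\ge n_1$ forced by $n_1\mid n_2$ is exactly what makes this go through.
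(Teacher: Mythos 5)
Your proposal is correct, and for three of the four identities it coincides with the paper's route: \eqref{ttrr1} from Remark \ref{th1} together with Theorem \ref{zsml1}, \eqref{ttrr3} from Theorem \ref{zsml12} together with the known value $s(C_{n_1}\oplus C_{n_2})=2n_1+2n_2-3$ (the paper cites \cite[Theorem 2.3]{BGSCH} for exactly this input), and \eqref{ttrr4} by subtraction. The genuine divergence is \eqref{ttrr2}: the paper simply cites Proposition 5 of \cite{FH}, whereas you prove it from scratch. Your lower-bound sequence $e_1^{n_1-1}e_2^{mn_2-1}$ and the bookkeeping for the upper bound are both sound: after $j\le m-2$ extractions of zero-sums of length $\le n_2$ at least $2n_2+n_1-1$ terms remain, which indeed exceeds $\eta(G)=2n_1+n_2-2$ since $n_1\le n_2$, and the final $D(G)=n_1+n_2-1$ terms supply the $m$-th zero-sum. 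What your version buys is self-containedness; what it costs is the input $\eta(C_{n_1}\oplus C_{n_2})=2n_1+n_2-2$, which is itself a nontrivial theorem. Note, however, that your chain of inequalities only needs $\eta(G)\le 2n_2+n_1-1$, so the elementary bound $s_{\le n_2}(C_{n_1}\oplus C_{n_2})\le 2n_2+n_1-2$ of Delorme--Ordaz--Quiroz (already invoked in the paper as the base case of Theorem \ref{thth}) would serve in place of the exact value of $\eta$, making your argument for \eqref{ttrr2} fully elementary; you could mention this to avoid leaning on a deep result where a cheap one suffices.
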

\begin{proof}
The equation \eqref{ttrr1} is a~consequence of Remark \ref{th1} and Theorem \ref{zsml1}, the equation \eqref{ttrr2} follows from Proposition 5 in \cite{FH}. By applying \cite[Theorem 2.3]{BGSCH} and Theorem \ref{zsml12}, we can obtain \eqref{ttrr3}.
The equation \eqref{ttrr4} is a~consequence of equations \eqref{ttrr2} and \eqref{ttrr3}. Note that, if $G$ is an Abelian group, then $D(G)-1$ is the maximal length of a~zero-sum free sequence over $G.$
\end{proof}

\section{A generalization of the Kemnitz conjecture}
Kemnitz's conjecture states that every set $S$ of $4n-3$ lattice points in the plane has a subset $S'$ with $n$ points whose centroid is also a lattice point. The conjecture was proved by Christian Reiher (see \cite{REI}).
In order to prove the generalization of this theorem, we will use the equation \eqref{ttrr3}. 
\begin{thm}\label{GK}
Let $n$ and $m$ be natural numbers. Let $S$ be a set of \\
$(m+3)n-3$ lattice points in $2$-dimensional Euclidean space.
Then there are at least $m$ pairwise disjoint sets $S_1,S_2,\ldots,S_m\subseteq S$ with $n$ points each, such that the centroid of each set $S_{i}$ is also a lattice point.
 \end{thm}
\begin{proof}
As Harborth has already noted (see \cite{EEGKR,HAR}), $s(C_n^r)$ is the smallest integer $l$ such that every set
of $l$ lattice points in $r$-dimensional Euclidean space contains $n$ elements which have a centroid in a lattice point.
By analogy, $s_{(m)}(C_n^r)$ is the smallest integer $l$ such that every set $S$ of $l$ lattice points in $r$-dimensional Euclidean space have $m$ pairwise disjoint subsets $S_1,S_2,\ldots,S_m$ each of cardinality $n,$ which centroids are also a lattice points.\\
Finally, in the case of $2$-dimensional Euclidean space, it is sufficient to use the equation \eqref{ttrr3}.
\end{proof}
\begin{rem} 
As we can see in point 3) of Definition \ref{DD}, 
Theorem \ref{GK} is also true when we replace \it{sets} with \it{multisets}.
\end{rem}
\section{Some results on $s_{I,m}(G)$ constant}
In this section, we will investigate zero-sum constants for finite Abelian groups. We start with $s_{\le k}(G),D_m(G) ,\eta(G)$ constants. Our main result of this section is Theorem \ref{mainth}.
Olson calculated $s_{\le p}(C_p^2)$ for a~prime number $p$ (see \cite{JOAII}).
No precise result is known for $s_{\le p}(C_p^n),$ where $n\ge 3.$
We need two technical lemmas:
\begin{lem}\label{th8b2}
Let $p$ be a~prime number and $n\ge 2.$ Then:
\begin{equation}
s_{\le (n-1)p}(C_p^n)\le (n+1)p-n.
\end{equation}
\end{lem}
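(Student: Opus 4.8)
The plan is to deduce the bound from the Davenport constants of $C_p^n$ and $C_p^{n+1}$ by means of a length-encoding lift. Since both groups are $p$-groups, Remark~\ref{th1} gives $D(C_p^n)=n(p-1)+1=np-n+1$ and $D(C_p^{n+1})=(n+1)(p-1)+1=(n+1)p-n$; in particular the claimed bound $(n+1)p-n$ is exactly $D(C_p^{n+1})$. So let $S$ be any sequence over $C_p^n$ with $|S|=(n+1)p-n$, and define the lift $\varphi\colon C_p^n\to C_p^{n+1}=C_p^n\oplus C_p$ by $\varphi(g)=(g,1)$. For a subsequence $T\mid S$ one has $\sigma(\varphi(T))=(\sigma(T),\,|T|\bmod p)$, so a zero-sum subsequence of $\varphi(S)$ in $C_p^{n+1}$ is precisely a subsequence $T$ of $S$ with $\sigma(T)=0$ in $C_p^n$ and $p\mid|T|$.

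First I would extract such a $T$. Since $|\varphi(S)|=(n+1)p-n=D(C_p^{n+1})$, the sequence $\varphi(S)$ has a non-empty zero-sum subsequence, and choosing one minimal with respect to inclusion, its length is at most $D(C_p^{n+1})=(n+1)p-n$. Pulling back, we obtain a non-empty $T\mid S$ with $\sigma(T)=0$ in $C_p^n$, with $p\mid|T|$, and with $1\le|T|\le(n+1)p-n$. Thus $|T|$ is a positive multiple of $p$ not exceeding $(n+1)p-n<(n+1)p$, so $|T|\in\{p,2p,\dots,np\}$. If $|T|\le(n-1)p$ we are already done, since $T$ is then the required zero-sum subsequence. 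The only remaining possibility is $|T|=np$, which forces $p\ge n$ (as $np\le(n+1)p-n$ is equivalent to $p\ge n$); when $p<n$ no multiple of $p$ lies in $((n-1)p,(n+1)p-n]$, so this case cannot occur and the lemma already follows.

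It remains to treat $|T|=np$, and here I would split $T$ inside $C_p^n$. Since $|T|=np>np-n+1=D(C_p^n)$ for $n\ge2$, the zero-sum sequence $T$ contains a minimal non-empty zero-sum subsequence $T'$ with $|T'|\le D(C_p^n)=np-n+1<np$, so $T'$ is a proper subsequence of $T$ and the complementary subsequence $T\setminus T'$ satisfies $\sigma(T\setminus T')=\sigma(T)-\sigma(T')=0$ and is non-empty. Now either $|T'|\le(n-1)p$, and $T'$ is the desired subsequence, or $|T'|\ge(n-1)p+1=np-p+1$, in which case $|T\setminus T'|=np-|T'|\le p-1\le(n-1)p$, and $T\setminus T'$ is the desired subsequence. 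In both cases $S$ has a non-empty zero-sum subsequence of length at most $(n-1)p$, which proves $s_{\le(n-1)p}(C_p^n)\le(n+1)p-n$.

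The step I expect to be the crux is the use of the lift rather than working directly in $C_p^n$. A minimal zero-sum subsequence of $S$ itself can have length up to $D(C_p^n)=np-n+1$, which exceeds the target $(n-1)p$ precisely when $p\ge n$; moreover its complement in $S$ need not be zero-sum, so there is nothing to split. Encoding the length in an extra $C_p$-coordinate forces $|T|$ to be a multiple of $p$, hence to avoid the forbidden window $((n-1)p,\,np)$ entirely; this is exactly what makes the complementary piece $T\setminus T'$ short (of length below $p$) in the one surviving case. I would double-check only the elementary facts $np>np-n+1$ and $p-1\le(n-1)p$ for $n\ge2$, together with the identification of the admissible multiples of $p$, all of which are immediate.
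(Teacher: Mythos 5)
Your proof is correct and follows essentially the same route as the paper: the lift $\varphi(g)=(g,1)$ into $C_p^{n+1}$ is exactly the paper's sequence $\prod_i(x+g_i)$ with $x\notin C_p^n$, used to force $p\mid|T|$ via $D(C_p^{n+1})=(n+1)p-n$, with the residual case $|T|=np$ handled by splitting $T$ into two complementary zero-sum pieces using $D(C_p^n)$. The only cosmetic difference is in that last step, where the paper takes the shorter of the two pieces (length $\le\lfloor np/2\rfloor\le(n-1)p$) while you bound the minimal piece by $D(C_p^n)$ and otherwise use its complement of length $\le p-1$; both are valid.
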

\begin{proof}
Let $g_i\in C_p^n,\,i\in[1,(n+1)p-n].$ Embed $C_p^n$ into an Abelian group $F$ group isomorphic to $ C_p^{n+1}.$ Let $x\in F,\,x\notin C_p^n.$ Since $D(C_p^{n+1})=(n+1)p-n$ (see \cite{JOAI} or \mbox{Remark \ref{th1},(a)}) there exists a zero-sum subsequence $\prod\limits_{i\in I}(x+g_i)$ of the sequence $\prod\limits_{i=1}^{(n+1)p-n}(x+g_i).$ But this possible only if $p$ divides $|I|.$ Rearranging subscripts, we may assume that $g_1+g_2+\ldots+g_{ep}=0,$ where $e\in[1,n].$ We are done if $e\in[1,n-1].$ If $e=n$ we obtain a zero-sum sequence $S=g_1\cdot g_2\cdot\ldots\cdot g_{np}.$ Zero-sum sequence~$S$ contains a~proper zero-sum subsequence $S',$ since $D(C_p^n)=np-(n-1),$ and thus zero-sum subsequence of length not exceeding $\lfloor\frac{np+1}{2}\rfloor \le (n-1)p.$
\end{proof}

\begin{cor}\label{th8b2c}
Let $p$ be a~prime. Then:
\begin{equation}
s_{\le 2p}(C_p^3)\le 4p-3.
\end{equation}
\end{cor}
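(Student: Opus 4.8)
The plan is to obtain this corollary as the immediate specialization of Lemma \ref{th8b2} to the case $n=3$. The hypothesis of the lemma requires $n\ge 2$, which is satisfied since $n=3$. Substituting $n=3$ into the bound $s_{\le (n-1)p}(C_p^n)\le (n+1)p-n$, the subscript $(n-1)p$ becomes $2p$, matching the left-hand side $s_{\le 2p}(C_p^3)$ that we wish to bound, and the right-hand side $(n+1)p-n$ becomes $4p-3$. Thus the statement is nothing more than the $n=3$ instance of the preceding lemma, and no additional argument is needed beyond reading off the arithmetic.

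If one wishes to trace the mechanism rather than merely cite the lemma, I would recall the two facts that drive the proof of Lemma \ref{th8b2} in this case: the embedding of $C_p^3$ into $F\cong C_p^4$ together with the value $D(C_p^4)=4p-3$ (which produces, from any sequence of length $4p-3$, a zero-sum subsequence of the shifted sequence $\prod(x+g_i)$ whose length is a positive multiple of $p$, hence $ep$ with $e\in\{1,2,3\}$), and the value $D(C_p^3)=3p-2$, which lets one cut down the length-$3p$ case to a proper zero-sum subsequence of length at most $\lfloor (3p+1)/2\rfloor\le 2p$. Both Davenport values are available since $C_p^n$ is a $p$-group, so $D=D^*$ by Remark \ref{th1}(1)(a).

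There is no genuine obstacle here: the only thing to verify is the two pieces of arithmetic, namely $(n-1)p=2p$ and $(n+1)p-n=4p-3$ at $n=3$, and that the length-reduction inequality $\lfloor (np+1)/2\rfloor\le (n-1)p$ used in the lemma remains valid at $n=3$ (where it reads $\lfloor (3p+1)/2\rfloor\le 2p$, which holds for every prime $p$). Consequently the proof reduces to invoking Lemma \ref{th8b2} with $n=3$.
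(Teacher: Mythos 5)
Your proposal is correct and matches the paper exactly: the corollary is stated without proof precisely because it is the $n=3$ instance of Lemma \ref{th8b2}, and your arithmetic ($(n-1)p=2p$, $(n+1)p-n=4p-3$) is right. The additional tracing of the lemma's mechanism is accurate but not needed.
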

\begin{lem}\label{th9d}
Let $G$ be a~finite Abelian group, $k\in\mathbb{N},\,k\ge \exp(G)$.\\
If $s_{[1,k],1}(G)\le s_{[1,k],m}(G)+k$, then $s_{[1,k],m+1}(G)\le s_{[1,k],m}(G)+k.$
\end{lem}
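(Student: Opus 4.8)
The plan is to show directly that every sequence $S$ over $G$ with $|S|\ge s_{[1,k],m}(G)+k$ already contains $m+1$ pairwise disjoint non-empty zero-sum subsequences with all lengths in $[1,k]$; by the very definition of $s_{[1,k],m+1}(G)$ this is equivalent to the asserted bound $s_{[1,k],m+1}(G)\le s_{[1,k],m}(G)+k$. The mechanism I would use is to peel off one short zero-sum block first and then apply the definition of $s_{[1,k],m}(G)$ to what remains.

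First I would invoke the hypothesis to produce that first block. Since $s_{[1,k],1}(G)\le s_{[1,k],m}(G)+k\le |S|$, the sequence $S$ is long enough to fall under the definition of $s_{[1,k],1}(G)=s_{\le k}(G)$; hence $S$ contains a non-empty subsequence $R$ with $\sigma(R)=0$ and $|R|\in[1,k]$. This is the step where the assumption $s_{[1,k],1}(G)\le s_{[1,k],m}(G)+k$ is essential, and I regard it as the only genuinely load-bearing point of the proof: without it there is no guarantee that a removable short zero-sum block exists at all.

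Next I would delete $R$ and control the length of the remainder. Let $S''=S\cdot R^{-1}$ be the complementary subsequence of $S$ obtained by removing the terms of $R$. The crucial numerical observation is that $|R|\le k$, so $|S''|=|S|-|R|\ge (s_{[1,k],m}(G)+k)-k=s_{[1,k],m}(G)$. Thus $S''$ is long enough to contain, by the definition of $s_{[1,k],m}(G)$, some $m$ disjoint non-empty subsequences $S_1,\ldots,S_m$ with $\sigma(S_i)=0$ and $|S_i|\in[1,k]$; as these lie in $S''=S\cdot R^{-1}$, they are automatically disjoint from $R$.

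Finally I would assemble $R,S_1,\ldots,S_m$ into the required family of $m+1$ pairwise disjoint non-empty zero-sum subsequences of $S$, each of length in $[1,k]$, and since this argument applies to every $S$ with $|S|\ge s_{[1,k],m}(G)+k$, conclude $s_{[1,k],m+1}(G)\le s_{[1,k],m}(G)+k$. The only bookkeeping to watch is that all extracted blocks keep their lengths in $[1,k]$ and that the ``$+k$'' in the target bound is matched \emph{exactly} by the length of the single block $R$ peeled off at the start; no estimate beyond this length accounting is needed, which is why I expect no serious obstacle once the hypothesis has been used to guarantee the first block.
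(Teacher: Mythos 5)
Your argument is correct and is essentially the paper's own proof: use the hypothesis to extract one zero-sum block of length at most $k$, note the remainder still has length at least $s_{[1,k],m}(G)$, and apply the definition of $s_{[1,k],m}(G)$ to obtain the other $m$ disjoint blocks. No meaningful difference in approach.
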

\begin{proof}
Let $S$ be a~sequence over $G$ of length $s_{[1,k],m}(G)+k$. The sequence $S$ contains a non-empty subsequence $S_0|S$ such that $\sigma(S_0)=0.$ Then $|S_0|\in[1,k],$ since $|S|\ge s_{[1,k],1}(G)=s_{\le k}(G).$ By definition of $s_{[1,k],m}(G)$ the remaining elements in $S$ contain $m$ disjoint non-empty subsequences $S_i|S$ such that $\sigma(S_i)=0$, $|S_i|\in[1,k],$ where $i\in[1,m].$ Thus, we get $m+1$ non-empty disjoint subsequences $S_i|S$ such that $\sigma(S_i)=0$, $|S_i|\in[1,k],$ where $i\in[0,m].$
\end{proof}

\begin{cor}\label{th9e}
Let $G$ be a~finite Abelian group, $k\ge \exp(G).$\\
If $s_{[1,k],1}(G)\le s_{[1,k],m}(G)+k,$ then $s_{[1,k],m+n} (G)\le s_{[1,k],m} (G)+nk.$
\end{cor}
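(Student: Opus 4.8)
The plan is to induct on $n$, using Lemma \ref{th9d} repeatedly as the engine. The key preliminary observation is that $s_{[1,k],j}(G)$ is nondecreasing in $j$: this is exactly the monotonicity recorded in Remark \ref{th9c2b} (namely $s_{I,n'}(G)\ge s_{I,n}(G)$ whenever $n'\ge n$). Consequently $s_{[1,k],m}(G)\le s_{[1,k],m'}(G)$ for every $m'\ge m$, so the standing hypothesis $s_{[1,k],1}(G)\le s_{[1,k],m}(G)+k$ automatically upgrades to
$$
s_{[1,k],1}(G)\le s_{[1,k],m'}(G)+k\qquad\text{for all } m'\ge m.
$$
In other words, the hypothesis of Lemma \ref{th9d} is satisfied not only at level $m$ but at every level $m'\ge m$.

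With this in hand, the base case $n=1$ is precisely Lemma \ref{th9d}. For the inductive step I would assume $s_{[1,k],m+n}(G)\le s_{[1,k],m}(G)+nk$ and apply Lemma \ref{th9d} with $m$ replaced by $m+n$: the hypothesis at level $m+n$ holds by the previous paragraph, so the lemma gives $s_{[1,k],m+n+1}(G)\le s_{[1,k],m+n}(G)+k$. Combining this with the inductive hypothesis yields
$$
s_{[1,k],m+n+1}(G)\le s_{[1,k],m}(G)+(n+1)k,
$$
which closes the induction.

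I do not expect any genuine obstacle here, since the corollary is just the iterated form of Lemma \ref{th9d}. The only point that needs to be checked with care is that the hypothesis $s_{[1,k],1}(G)\le s_{[1,k],\,\cdot\,}(G)+k$ remains valid at each successive level $m, m+1, \ldots, m+n-1$ as the induction proceeds; this is supplied entirely by the monotonicity of $s_{[1,k],j}(G)$ in $j$, so no further input beyond Remark \ref{th9c2b} and Lemma \ref{th9d} is required.
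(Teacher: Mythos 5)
Your proposal is correct and matches the paper's own (one-line) proof, which likewise just iterates Lemma \ref{th9d} together with the monotonicity of $s_{[1,k],j}(G)$ in $j$ from Remark \ref{th9c2b}. The only difference is that you spell out the induction and the propagation of the hypothesis to levels $m'\ge m$, which the paper leaves implicit.
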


\begin{proof}
We use Lemma~\ref{th9d} and Remark \ref{th9c2}.
\end{proof}

\begin{cor}\label{th9f}
Let $G$ be a~finite Abelian group, $k\ge \exp(G)$. Then:
\begin{equation}\label{xeq32}
D_{n} (G)\le s_{[1,k],n}(G)\le s_{\le k}(G)+k(n-1),
\end{equation}
\begin{equation}\label{xeq34}
D_{n} (G)\le s_{[1,\exp(G)],n}(G)\le \eta(G)+\exp(G)(n-1).
\end{equation}
\end{cor}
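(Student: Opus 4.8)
The plan is to read off both displays from the single inductive estimate of Corollary~\ref{th9e}, specialized to $m=1$, while disposing of the two lower bounds by an elementary monotonicity remark. The genuine content lies in the upper bound of \eqref{xeq32}; once that is in hand, \eqref{xeq34} will be merely its value at $k=\exp(G)$.

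I would begin with the upper bound $s_{[1,k],n}(G)\le s_{\le k}(G)+k(n-1)$. Apply Corollary~\ref{th9e} with $m=1$; since $k\ge\exp(G)>0$, its hypothesis $s_{[1,k],1}(G)\le s_{[1,k],1}(G)+k$ holds automatically, so nothing has to be verified. The conclusion then gives $s_{[1,k],1+j}(G)\le s_{[1,k],1}(G)+jk$ for every $j\ge 1$. Taking $j=n-1$ and using $s_{[1,k],1}(G)=s_{[1,k]}(G)=s_{\le k}(G)$ from Remark~\ref{th9c2} yields the claim for $n\ge 2$, the case $n=1$ being the definitional identity $s_{[1,k],1}(G)=s_{\le k}(G)$.

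For the lower bound $D_n(G)\le s_{[1,k],n}(G)$ I would argue by monotonicity in the admissible length set: since $[1,k]\subseteq\mathbb{N}$, every family of $n$ disjoint non-empty zero-sum subsequences with lengths in $[1,k]$ is a fortiori such a family with lengths in $\mathbb{N}$. Hence any sequence that forces the former also forces the latter, so $s_{[1,k],n}(G)\ge s_{\mathbb{N},n}(G)=D_n(G)$ by definition; this also recovers the relevant inequality of Remark~\ref{th9c2b} without the restriction $k\le D(G)$. This settles \eqref{xeq32}.

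Finally, \eqref{xeq34} is obtained by setting $k=\exp(G)$ in \eqref{xeq32}: the standing assumption $k\ge\exp(G)$ holds with equality, the lower bound is untouched, and on the right $s_{\le\exp(G)}(G)=\eta(G)$ by definition while $k(n-1)=\exp(G)(n-1)$. I anticipate no real difficulty here; the only points demanding attention are the index shift in Corollary~\ref{th9e} (its running index $m+j$ against the target index $n$) and the trivial $n=1$ endpoint, both of which are purely bookkeeping.
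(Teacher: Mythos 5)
Your proposal is correct and follows essentially the same route as the paper: the upper bound in \eqref{xeq32} comes from Corollary~\ref{th9e} with $m=1$ (whose hypothesis is vacuous), the lower bound is the monotonicity $s_{[1,k],n}(G)\ge s_{\mathbb{N},n}(G)=D_n(G)$ recorded in Remark~\ref{th9c2b}, and \eqref{xeq34} is the specialization $k=\exp(G)$ with $s_{\le\exp(G)}(G)=\eta(G)$. Your added bookkeeping (the index shift in Corollary~\ref{th9e}, the trivial $n=1$ case, and the observation that the lower bound needs no restriction $k\le D(G)$) is accurate but does not change the argument.
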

\begin{proof}
We use Remark \ref{th9c2b}, Corrolary \ref{th9e} with $m=1$ and get (\ref{xeq32}).\\
We put $k=\exp(G)$ in (\ref{xeq32}) and get (\ref{xeq34}).
\end{proof}

\begin{rem}\label{th9}
It is known that:\\
$\eta(C_n^3)=8n-7,$ if $n=3^\alpha 5^\beta,$ with $\alpha,\beta\ge 0;$\\$ \eta(C_n^3)=7n-6$, if $n=2^\alpha 3$, with $\alpha\ge 1;$\\$\eta(C_2^3)=8;\eta(C_3^3)=17;\eta(C_3^4)=39;\eta(C_3^5)=89;\eta(C_3^6)=223,$ (see \cite{BGAA}).
\end{rem}

\begin{cor}\label{th10}
We have that:\\
$D_{m}(C_n^3)\le nm+7n-7,$ if $n=3^\alpha 5^\beta,$ with $\alpha,\beta\ge 0;$
\\$ D_{m}(C_n^3)\le nm+6n-6$, if $n=2^\alpha 3$, with $\alpha\ge 1;$\\
$D_{m}(C_3^4)\le 3m+36; D_{m}(C_3^5)\le 3m+86;D_{m}(C_3^6)\le 3m+220.$
\end{cor}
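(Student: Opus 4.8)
The final statement to prove is Corollary~\ref{th10}, which gives explicit upper bounds on the $m$-wise Davenport constant $D_m(C_n^3)$ (and a few related groups) under the stated arithmetic conditions on $n$. My plan is to recognize this as an immediate specialization of the general inequality \eqref{xeq34} in Corollary~\ref{th9f}, combined with the tabulated values of $\eta$ recorded in Remark~\ref{th9}.

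First I would invoke Corollary~\ref{th9f}, specifically the right-hand inequality in \eqref{xeq34}, which states that $D_n(G)\le \eta(G)+\exp(G)(n-1)$ for any finite abelian group $G$; here the index variable playing the role of the number of disjoint zero-sum subsequences is what the corollary calls $n$, so to match the statement of Corollary~\ref{th10} I would rename it to $m$, giving $D_m(G)\le \eta(G)+\exp(G)(m-1)$. Then I would apply this with $G=C_n^3$, noting that $\exp(C_n^3)=n$, so that the bound becomes $D_m(C_n^3)\le \eta(C_n^3)+n(m-1)=\eta(C_n^3)+nm-n$.

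Next I would substitute the explicit values of $\eta(C_n^3)$ supplied in Remark~\ref{th9}. In the case $n=3^\alpha 5^\beta$ we have $\eta(C_n^3)=8n-7$, so $D_m(C_n^3)\le 8n-7+nm-n=nm+7n-7$; in the case $n=2^\alpha 3$ we have $\eta(C_n^3)=7n-6$, giving $D_m(C_n^3)\le nm+6n-6$. For the remaining three groups I would use the corresponding listed values together with $\exp(C_3^k)=3$: from $\eta(C_3^4)=39$ we get $D_m(C_3^4)\le 39+3(m-1)=3m+36$, from $\eta(C_3^5)=89$ we get $D_m(C_3^5)\le 3m+86$, and from $\eta(C_3^6)=223$ we get $D_m(C_3^6)\le 3m+220$.

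Since every step is a direct substitution into an already-established inequality, there is no genuine obstacle here; the only thing to be careful about is the clash of index names between Corollary~\ref{th9f} and Corollary~\ref{th10}, and the routine arithmetic of simplifying $\eta(C_n^3)+n(m-1)$ in each case. Thus the whole proof reduces to citing \eqref{xeq34} and Remark~\ref{th9} and collecting terms.
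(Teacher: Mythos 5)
Your proposal is correct and matches the paper's proof exactly: the paper likewise derives Corollary~\ref{th10} by specializing inequality \eqref{xeq34} of Corollary~\ref{th9f} to $G=C_n^3$ (respectively $C_3^4$, $C_3^5$, $C_3^6$) and substituting the values of $\eta$ from Remark~\ref{th9}. The arithmetic in each case checks out.
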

\begin{proof}
By Corollary \ref{th9f} and Remark \ref{th9}.
\end{proof}

In the next Lemma we collect several useful properties on the Davenport constant.
\begin{lem}\label{th7}
Let $G$ be a~non-trivial finite Abelian group and $H$ subgroup of $G.$ Then:
\begin{equation}\label{xeq3}
D(H)+D(G/H)-1\le D(G)\le D_{D(H)}(G/H) \le D(H)D(G/H).
\end{equation}
\end{lem}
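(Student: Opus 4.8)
The plan is to prove the chain of inequalities

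$$D(H)+D(G/H)-1 \le D(G) \le D_{D(H)}(G/H) \le D(H)D(G/H)$$

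in three separate parts, treating each inequality independently since the techniques differ.

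For the leftmost inequality, $D(H)+D(G/H)-1 \le D(G)$, I would argue by constructing a long zero-sum-free sequence in $G$ from zero-sum-free sequences in $H$ and $G/H$. Take a zero-sum-free sequence $T = h_1 \cdots h_{D(H)-1}$ in $H$ of maximal length $D(H)-1$, and a zero-sum-free sequence $\bar{U} = \bar{g}_1 \cdots \bar{g}_{D(G/H)-1}$ in $G/H$ of maximal length $D(G/H)-1$. Lift each $\bar{g}_j$ to some $g_j \in G.$ The plan is to show that the concatenated sequence $h_1 \cdots h_{D(H)-1} \cdot g_1 \cdots g_{D(G/H)-1}$ is zero-sum-free in $G$: any zero-sum subsequence, when reduced modulo $H$, forces the $\bar{g}_j$-part to be zero-sum in $G/H$ (hence empty), after which the remaining part lies in $H$ and must be zero-sum there (hence empty). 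This exhibits a zero-sum-free sequence of length $D(H)-1 + D(G/H)-1$, so $D(G) \ge D(H)+D(G/H)-1.$

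For the rightmost inequality, $D_{D(H)}(G/H) \le D(H)D(G/H)$, I would invoke the general bound already recorded in the excerpt: by Remark~\ref{th9c2b}, for any finite abelian group one has $D_n(G/H) \le n\,D(G/H)$, and applying this with $n = D(H)$ gives the claim immediately.

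The main obstacle is the middle inequality $D(G) \le D_{D(H)}(G/H)$. Here the plan is to take any sequence $S$ over $G$ of length $D_{D(H)}(G/H)$ and produce a nonempty zero-sum subsequence. Reducing $S$ modulo $H$ gives a sequence $\bar{S}$ over $G/H$ of the same length, which by definition of $D_{D(H)}(G/H)$ contains $D(H)$ disjoint nonempty zero-sum subsequences $\bar{S}_1, \ldots, \bar{S}_{D(H)}$. Each $S_i$ (the corresponding subsequence of $S$) then has $\sigma(S_i) \in H$, giving $D(H)$ elements $\sigma(S_1), \ldots, \sigma(S_{D(H)})$ of $H.$ Since this is a sequence of length $D(H)$ over $H$, it contains a nonempty zero-sum subsequence indexed by some $\emptyset \neq J \subseteq [1,D(H)]$, i.e.\ $\sum_{i \in J} \sigma(S_i) = 0.$ Then $\prod_{i \in J} S_i$ is a nonempty zero-sum subsequence of $S$ in $G$, which shows $D(G) \le D_{D(H)}(G/H).$ I expect the only delicate point to be bookkeeping the disjointness and nonemptiness of the $S_i$, ensuring the lifted subsequence is genuinely nonempty, which follows because each $\bar{S}_i$ is nonempty.
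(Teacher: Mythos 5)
Your proposal is correct, and its core argument --- proving the middle inequality by reducing a sequence modulo $H$, extracting $D(H)$ disjoint nonempty subsequences whose sums lie in $H$, and then finding a nonempty zero-sum subsequence among those $D(H)$ sums --- is exactly the paper's proof, as is the appeal to $D_n(G/H)\le n\,D(G/H)$ for the rightmost inequality. The only difference is that you prove the leftmost inequality directly, by concatenating a maximal zero-sum-free sequence in $H$ with lifts of a maximal zero-sum-free sequence in $G/H$ (a correct and standard argument), whereas the paper simply cites Proposition 3(i) of \cite{FH} for that step.
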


\begin{proof}
The inequality $D(H)+D(G/H)-1\le D(G)$ is proved in Proposition 3(i) in \cite{FH}. Now we prove the inequality $D(G)\le D_{D(H)}(G/H)$ on the same lines as in \cite{DOrQui} (we include the proof for the sake of completeness). If $|S|\ge D_{D(H)}(G/H)$ is any sequence over $G$, then one can, by definition, extract at least $D(H)$ disjoint non empty subsequences $S_1,\ldots,S_{D(H)}|S$ such that $\sigma(S_i)\in H.$ Since $T=\prod\limits_{i=1}^{D(H)}\sigma (S_i)$ is a~sequence over $H$ of length $D(H),$
thus there exists a~non-empty subset $I\subseteq [1,D(H)]$ such that $T'=\prod\limits_{i\in I}\sigma (S_i)$ is a~zero-sum subsequent of $T.$\\
We obtain that $S'=\prod\limits_{i\in I}S_i$ is a~non-empty zero-sum subsequence of $S$.\\
The inequality $D_{D(H)}(G/H) \le D(H)D(G/H)$ follows from Remark \ref{th9c2b}.
\end{proof}

\begin{thm}\label{th16}
For an Abelian group $C_{p}\oplus C_{n_2}\oplus C_{n_3}$ such that $p|n_2|n_3\in\mathbb{N},$ where $p$ is a prime number, we have:
\begin{equation}\label{xeq10}
n_3+n_2+p-2\le D(C_{p}\oplus C_{n_2}\oplus C_{n_3})\le D_{\tfrac{n_2}{p}+\tfrac{n_3}{p}-1}(C_{p}^3)\le 2n_3+2n_2-3.
\end{equation}
\end{thm}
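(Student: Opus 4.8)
The statement is a chain of three inequalities, so the plan is to establish each link separately, observing that the two outer inequalities are instances of general facts already available, while the genuine content lies in choosing the right subgroup for the middle step and the right auxiliary parameter $k$ for the last step. For the leftmost inequality, I would simply note that $n_3+n_2+p-2 = D^*(C_p\oplus C_{n_2}\oplus C_{n_3})$ by the definition $D^*(G)=1+\sum_{i}(n_i-1)$, so this bound is exactly the left-hand side of \eqref{xeq1}.

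For the middle inequality, the plan is to apply Lemma \ref{th7} with a carefully chosen subgroup $H$. Taking $H = 0\oplus pC_{n_2}\oplus pC_{n_3}$ (legitimate since $p\mid n_2$ and $p\mid n_3$), one has $H\cong C_{n_2/p}\oplus C_{n_3/p}$ and $G/H\cong C_p^3$. Since $n_2\mid n_3$ forces $\tfrac{n_2}{p}\mid\tfrac{n_3}{p}$, the group $H$ has rank at most two, so by Remark \ref{th1} its Davenport constant equals $D^*(H)=\tfrac{n_2}{p}+\tfrac{n_3}{p}-1$. The middle inequality of \eqref{xeq3} then gives $D(G)\le D_{D(H)}(G/H)=D_{\frac{n_2}{p}+\frac{n_3}{p}-1}(C_p^3)$, which is precisely the central term of \eqref{xeq10}.

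For the rightmost inequality I would use the refined estimate from Corollary \ref{th9f} rather than the cruder $D_n(G)\le nD(G)$, which turns out to be too weak here. Applying \eqref{xeq32} with $G=C_p^3$, the choice $k=2p\ge\exp(C_p^3)=p$, and $n=\tfrac{n_2}{p}+\tfrac{n_3}{p}-1$, combined with the bound $s_{\le 2p}(C_p^3)\le 4p-3$ from Corollary \ref{th8b2c}, yields $D_n(C_p^3)\le (4p-3)+2p(n-1)$. Substituting $n-1=\tfrac{n_2+n_3}{p}-2$ makes the $p$-terms cancel and leaves $2(n_2+n_3)-3=2n_3+2n_2-3$, completing the chain.

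The essential difficulty is localized in the last step: the naive bound $D_m(C_p^3)\le m(3p-2)$ does not reach $2n_3+2n_2-3$, and it is exactly the sharper constant $s_{\le 2p}(C_p^3)\le 4p-3$ from Corollary \ref{th8b2c} together with the well-tuned choice $k=2p$ that makes the arithmetic close. The only remaining points demanding care are confirming the isomorphism types $H\cong C_{n_2/p}\oplus C_{n_3/p}$ and $G/H\cong C_p^3$, and checking the divisibility $\tfrac{n_2}{p}\mid\tfrac{n_3}{p}$ so that the rank-two evaluation $D(H)=D^*(H)$ from Remark \ref{th1} is valid; these are routine once the subgroup $H$ is fixed.
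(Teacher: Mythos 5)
Your proposal is correct and follows essentially the same route as the paper: the lower bound via $D^*(G)$, the middle inequality via Lemma \ref{th7} applied to the subgroup $H\cong C_{n_2/p}\oplus C_{n_3/p}$ with $G/H\cong C_p^3$, and the upper bound via \eqref{xeq32} with $k=2p$ together with Corollary \ref{th8b2c}. The only difference is that you make explicit the evaluation $D(H)=D^*(H)=\tfrac{n_2}{p}+\tfrac{n_3}{p}-1$ from the rank-two case of Remark \ref{th1}, which the paper leaves implicit.
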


\begin{proof}
If $G=C_{p}\oplus C_{n_2}\oplus C_{n_3}$ such that $p|n_2|n_3\in\mathbb{N},$ then $\exp(G)=n_3.$
Note that: $n_3+n_2+p-2=D^*(G)\le D(G).$ Denoting by $H$ a subgroup of $G$ such that
$ H\cong C_{\tfrac{n_2}{p}}\oplus C_{\tfrac{n_3}{p}}.$
The quotient group $G/H\cong C_{p}\oplus C_{p}\oplus C_{p}.$\\
By Lemma \ref{th7} we get
\begin{equation}\label{xeq12}
D(G)\le D_{D(H)}(G/H)=D_{\tfrac{n_2}{p}+\tfrac{n_3}{p}-1}(C_{p}^3).
\end{equation}
By (\ref{xeq32}) (with $m={\tfrac{n_2}{p}+\tfrac{n_3}{p}-1},\,k=2p$) and Corollary \ref{th8b2c}, we get
\begin{equation}\label{xeq23}
\begin{split}
D(G)&\le s_{\le 2p}(C_{p}^3)+ 2p(\tfrac{n_2}{p}+\tfrac{n_3}{p}-2)\le \\
&\le 2p(\tfrac{n_2}{p}+\tfrac{n_3}{p}-2)+(4p-3)= 2n_3+2n_2-3.
\end{split}
\end{equation}
\end{proof}
Our next goal is to generalize \cite[Theorem 3.2]{DOrQui} to the case $r(G)\ge 3.$
\begin{thm}\label{thth}
Let $H, K$ and $L$ be Abelian groups of orders $|H| = h,$\\
$|K| = k$ and $|L|=l$. If $G = H\oplus K\oplus L$ with $h|k|l$. \\
Let $\Omega(h)$ denote the total number of prime factors of $h.$ Then:
\begin{equation}
s_{\le 2^{\Omega(h)}l}(G)\le 2^{\Omega(h)}(2l+k + h) - 3.
\end{equation}
\end{thm}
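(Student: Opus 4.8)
The plan is to induct on $\Omega(h)$, the engine being a ``pullback'' inequality that turns a short zero-sum in a quotient together with a short zero-sum in a subgroup into a short zero-sum in $G$; this is the same mechanism that powers Lemma~\ref{th7} and the special case Theorem~\ref{th16}. Concretely, for any subgroup $N\le G$ and integers $k_1\ge 1$, $k_2\ge \exp(G/N)$ I would establish
\begin{equation}\label{eqpb}
s_{\le k_1k_2}(G)\le s_{[1,k_2],\,s_{\le k_1}(N)}(G/N)\le s_{\le k_2}(G/N)+k_2\big(s_{\le k_1}(N)-1\big),
\end{equation}
where the second inequality is just \eqref{xeq32}. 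For the first inequality I would argue exactly as in Lemma~\ref{th7}: given a sequence $S$ over $G$ of length $s_{[1,k_2],M}(G/N)$ with $M=s_{\le k_1}(N)$, read $S$ modulo $N$ and extract $M$ disjoint non-empty subsequences $S_1,\dots,S_M$ with $\sigma(S_i)\in N$ and $|S_i|\le k_2$; the sums $\sigma(S_1),\dots,\sigma(S_M)$ form a sequence over $N$ of length $M=s_{\le k_1}(N)$, hence $\prod_{i\in I}\sigma(S_i)=0$ in $N$ for some non-empty $I$ with $|I|\le k_1$, and then $\prod_{i\in I}S_i$ is a non-empty zero-sum subsequence of $S$ of length at most $k_1k_2$.

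For the inductive step I would assume $h>1$, pick a prime $p\mid h$ (so $p\mid k$ and $p\mid l$ as $h\mid k\mid l$), and choose subgroups $H'\le H$, $K'\le K$, $L'\le L$ of index $p$ (each exists since $p$ divides the respective order). Setting $N=H'\oplus K'\oplus L'$, the group $N$ has the form required by the theorem with parameters $h/p\mid k/p\mid l/p$, while $G/N\cong(H/H')\oplus(K/K')\oplus(L/L')\cong C_p^3$, so $\exp(G/N)=p$. I would apply \eqref{eqpb} with $k_2=2p$ and $k_1=2^{\Omega(h)-1}(l/p)$, so that $k_1k_2=2^{\Omega(h)}l$ while $k_1=2^{\Omega(h/p)}(l/p)$ is exactly the index appearing in the theorem for $N$. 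Invoking Corollary~\ref{th8b2c} in the form $s_{\le 2p}(C_p^3)\le 4p-3$ together with the induction hypothesis $s_{\le k_1}(N)\le 2^{\Omega(h)-1}\big(\tfrac{2l+k+h}{p}\big)-3$, estimate \eqref{eqpb} yields
\begin{equation*}
s_{\le 2^{\Omega(h)}l}(G)\le (4p-3)+2p\Big(2^{\Omega(h)-1}\tfrac{2l+k+h}{p}-4\Big)=2^{\Omega(h)}(2l+k+h)-4p-3,
\end{equation*}
which is even smaller than the claimed $2^{\Omega(h)}(2l+k+h)-3$ since $p\ge 2$.

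The base case is $\Omega(h)=0$, i.e. $h=1$ with $H$ trivial, where the asserted bound reads $s_{\le l}(K\oplus L)\le 2l+k-2$ for abelian $K,L$ with $k\mid l$; this is the two-factor statement \cite[Theorem~3.2]{DOrQui}, which is precisely the result being strengthened here. I expect the main obstacle to be ensuring this base case is available for \emph{arbitrary} (not merely cyclic) $K$ and $L$, because the index-$p$ subgroups produced by the peeling need not be cyclic, so the recursion bottoms out at a general rank-two direct sum $K_0\oplus L_0$; one must therefore either cite a version of \cite[Theorem~3.2]{DOrQui} valid for arbitrary abelian factors or supply it. A secondary, purely bookkeeping point is checking the hypothesis of \eqref{xeq32} at each stage, namely $k_2=2p\ge\exp(G/N)=p$, which here holds automatically. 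The numerical slack of $-4p$ confirms that taking $k_2=2p$ (rather than a larger multiple of $p$) is the efficient choice and that no constant is lost in the recursion.
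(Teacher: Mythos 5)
Your proof is correct and follows essentially the same route as the paper's: the same index-$p$ subgroup $N=H'\oplus K'\oplus L'$ with $G/N\cong C_p^3$, the same key input $s_{\le 2p}(C_p^3)\le 4p-3$, and the same ``extract many short sums into $N$, then find a short zero-sum among them'' mechanism, which you merely package as the general inequality \eqref{xeq32} applied to $G/N$. Your bookkeeping even shows the recursion yields the slightly stronger bound $2^{\Omega(h)}(2l+k+h)-4p-3$ at each step, consistent with the paper, which extracts more blocks than the induction hypothesis strictly requires.
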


\begin{proof}
The proof will be inductive.
If $h=1,$ then by \cite[Theorem 3.2]{DOrQui} we have:
\begin{equation}
s_{\le 2^{\Omega(h)}l}(G)=s_{\le l}(K\oplus L)\le 2l+k-2= 2^{\Omega(1)}(2l+k+1)-3.
\end{equation}
Assume that $h>1$ and let $p$ be a prime divisor of $h.$\\
Let $H_1$ be a subgroup of $H,$ $K_1$ be a subgroup of $K,$ $L_1$ be a subgroup of $L,$ with indices $[H:H_1]=[K:K_1]=[L:L_1]=p.$\\
Put $h=ph_1,\,k=pk_1,\,l=pl_1$ and $Q=H_1\oplus K_1\oplus L_1.$\\
Assume inductively that theorem is true for $Q$ i.e.
\begin{equation}
s_{\le 2^{\Omega(h_1)}l_1}(Q)\le 2^{\Omega(h_1)}(2l_1+k_1+h_1)-3.
\end{equation}
Let $s=2^{\Omega(h)}(2l+k+h)-3$ and $S=g_1g_2\cdot\ldots\cdot g_s$ be a sequence of $G.$\\
We shall prove that there exists a subsequence of $S$ with length smaller or equal to $2^{\Omega(h)}l$ and zero sum. Let $b_i=g_i+Q\in G/Q.$ We consider the~sequence $\prod\limits_{i=1}^s b_i$ of length~$s.$\\
The quotient group $G/Q$ is isomorphic to $C_p^3$ and
\begin{equation}\label{cos2}
s=2p\big(2^{\Omega(h_1)}(2l_1+k_1+h_1)-2\big)+4p-3.
\end{equation}
Therefore, by Lemma \ref{th8b2c} there exist pairwise disjoint sets $I_j \subset [1,s],$\\
$|I_j|\le~2p$ and
\begin{equation}\label{cos1}
j\le j_0=2^{\Omega(h_1)}(2l_1+k_1+h_1)-1
\end{equation}
such that each sequence $\prod\limits_{i\in I_j}b_i$ has a zero sum in $G/Q.$\\
In other words $\sigma(\prod\limits_{i\in I_j}g_i)=\sum\limits_{i\in I_j}g_i\in Q.$
By induction assumption for $Q$ there exists $J\subseteq[1,j_0]$ with $|J|\le 2^{\Omega(h_1)}l_1$ such that
$\sum\limits_{j\in J}\sigma(\prod\limits_{i\in I_j}g_i)=0.$
Thus, we obtain zero sum subsequence of $S$ in $G$ of length not exceeding
$\sum\limits_{j\in J}|I_j|\le 2^{\Omega(h_1)}l_1\cdot 2p=2^{\Omega(h)}l,$ which ends the inductive proof.
\end{proof}
\begin{thm}\label{mainth}
Let $H_1, H_2,\ldots,H_n$ be Abelian groups of orders $|H_i| = h_i$.\\
If $n\ge 2$ and $G = H_1\oplus H_2\oplus\ldots\oplus H_n$ with $h_1|h_2|\ldots|h_n$, then
\begin{equation}\label{ththe1}
s_{\le (n-1)^{\Omega( h_n)}h_n}(G)\le (n-1)^{\Omega (h_{n})}(2(h_n-1)+(h_{n-1}-1)+\ldots + (h_1-1)+1).
\end{equation}
\end{thm}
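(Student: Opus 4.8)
The plan is to generalize the inductive argument used for Theorem~\ref{thth} (the case $n=3$) to an arbitrary number $n\ge 2$ of summands. The key observation is that the three-summand proof used two structural facts: first, a base case supplied by a known result, and second, an inductive descent that peels off one prime factor $p$ from a common divisor at a time, passing from $G$ to a quotient $G/Q\cong C_p^{\,n-1}$ and applying a bound on $s_{\le(n-1)p}(C_p^{\,n-1})$ from Lemma~\ref{th8b2}. In the present statement the exponent $n-1$ replaces the $2$ that appeared (as $3-1$) in Theorem~\ref{thth}, which is precisely what one expects when there are $n-1$ proper summands above the smallest one.

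\medskip

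First I would set up the double induction. The outer induction is on $\Omega(h_1)$, the total number of prime factors of the smallest order $h_1$ (note the exponent in \eqref{ththe1} is $(n-1)^{\Omega(h_n)}$, and since $h_1\mid h_n$ every prime factor of $h_1$ is a prime factor of $h_n$, so peeling factors off $h_1$ drives the induction). The base case is $h_1=1$, i.e. $\Omega(h_1)=0$: here $G=H_2\oplus\dots\oplus H_n$ effectively has $n-1$ summands, and I would invoke the $(n-1)$-summand instance of the theorem (or, for the genuine base, reduce to the two-summand result $s_{\le l}(K\oplus L)\le 2l+k-2$ of Delorme--Ordaz--Quiroz that grounded the $n=3$ proof). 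This nested structure — induction on $n$ as well as on $\Omega(h_1)$ — is the cleanest way to make the exponent $(n-1)^{\Omega(h_n)}$ emerge.

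\medskip

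Next, for the inductive step, I would fix a prime $p\mid h_1$, choose subgroups $H_{i,1}\le H_i$ of index $p$ in each summand, and set $Q=H_{1,1}\oplus\dots\oplus H_{n,1}$ so that $G/Q\cong C_p^{\,n}$. Writing $h_i=p\,h_i'$, the inductive hypothesis gives the bound for $Q$ with the smaller quantities $h_i'$. Taking a sequence $S$ of the asserted length $s=(n-1)^{\Omega(h_n)}(2(h_n-1)+\sum_{i=1}^{n-1}(h_i-1)+1)$, I would pass to residues $b_i=g_i+Q$ in $G/Q\cong C_p^{\,n}$, then repeatedly apply Lemma~\ref{th8b2} (with group $C_p^{\,n}$, so the relevant bound is $s_{\le(n-1)p}(C_p^{\,n})\le(n+1)p-n$) to extract pairwise disjoint index sets $I_j$, each of size at most $(n-1)p$, whose $b$-products are zero-sum in $G/Q$; equivalently $\sigma\bigl(\prod_{i\in I_j}g_i\bigr)\in Q$. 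Each such partial sum is an element of $Q$, and applying the inductive bound for $Q$ to the resulting sequence of partial sums yields a subset $J$ of the indices $j$ with $\sum_{j\in J}\sigma(\prod_{i\in I_j}g_i)=0$ in $G$ and $|J|\le(n-1)^{\Omega(h_1')}h_n'$. Multiplying the two length bounds gives a zero-sum subsequence of $S$ of length at most $(n-1)^{\Omega(h_1')}h_n'\cdot(n-1)p=(n-1)^{\Omega(h_1)}h_n$, as required.

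\medskip

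The main obstacle I anticipate is bookkeeping: verifying that the arithmetic identity analogous to \eqref{cos2} holds with the general exponent, i.e. that $s$ factors exactly as $(n-1)p$ times the length-threshold furnished by the inductive hypothesis for $Q$, so that enough disjoint blocks $I_j$ can be extracted before the residual sequence is exhausted. Concretely I must check that the number of available blocks $j_0$ matches the quantity $(n-1)^{\Omega(h_1')}(2(h_n'-1)+\sum_{i<n}(h_i'-1)+1)$ needed to feed the induction on $Q$, and that the counting in Lemma~\ref{th8b2} indeed supplies blocks of size $\le(n-1)p$ rather than the $\le 2p$ specialized to $C_p^3$. Once the factor $(n-1)$ (rather than $2$) is tracked consistently through both the block sizes and the recursive exponent, the estimate closes; the rest is the same telescoping of lengths already carried out in Theorem~\ref{thth}.
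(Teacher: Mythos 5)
Your proposal follows essentially the same route as the paper's own proof: a double induction (on the number of summands $n$, with the case $h_1=1$ dropping to $n-1$ summands and $n=2$ grounded in Delorme--Ordaz--Quiroz, and on the prime factorization via a subgroup $Q$ of index $p$ in each coordinate with $G/Q\cong C_p^{\,n}$), using Lemma~\ref{th8b2} to extract disjoint blocks of size at most $(n-1)p$ and then composing with the inductive bound for $Q$. The only blemish is notational: in your final length computation the exponent should be $\Omega(h_n')$ and $\Omega(h_n)$ rather than $\Omega(h_1')$ and $\Omega(h_1)$, matching the statement of \eqref{ththe1}; since the same prime $p$ is removed from every $h_i$, the telescoping $(n-1)^{\Omega(h_n^*)}h_n^*\cdot(n-1)p=(n-1)^{\Omega(h_n)}h_n$ closes exactly as in the paper.
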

\begin{proof}
We proceed by induction on $n$ and $h_n.$\\
If $n=2$, then the inequality (\ref{ththe1}) holds by [\cite{DOrQui} Theorem 3.2].
Namely:
\begin{equation}
\begin{split}
s_{\le (2-1)^{\Omega(h_2)}h_2}(H_1\oplus H_2)=\\
=s_{\le h_2}(H_1\oplus H_2)\le 2h_2+h_1-2=(2-1)^{\Omega(h_2)}(2(h_2-1)+(h_1-1)+1).
\end{split}
\end{equation}
Suppose that the inequality (\ref{ththe1}) holds for fixed $n-1\ge 2$:
\begin{equation}\label{ththe2}
\begin{split}
s_{\le (n-2)^{\Omega( h_{n-1})}h_{n-1}}(H_1\oplus\ldots\oplus H_{n-1})\le
\\
\le (n-2)^{\Omega ( h_{n-1})}(2(h_{n-1}-1)+(h_{n-2}-1)+\ldots + (h_1-1)+1).
\end{split}
\end{equation}
If $n\ge 3$ and $h_1=1,$ then $G$ and $H_2\oplus\ldots\oplus H_n$ are isomorphic. By (\ref{xeq31nnn}):
\begin{equation}\label{ththe3b}
\begin{split}
s_{\le (n-1)^{\Omega( h_n)}h_{n}}(G)=\\
=s_{\le (n-1)^{\Omega( h_n)}h_{n}}(H_2\oplus\ldots\oplus H_n)\le\\
\le s_{\le (n-2)^{\Omega( h_n)}h_{n}}(H_2\oplus\ldots\oplus H_n).
\end{split}
\end{equation}
Thus by the induction hypothesis (\ref{ththe2}):
\begin{equation}
\begin{split}
s_{\le (n-1)^{\Omega( h_n)}h_{n}}(G)\le\\
(n-2)^{\Omega(h_{n})}(2(h_{n}-1)+(h_{n-1}-1)+\ldots + (h_2-1)+1)\le
\end{split}
\end{equation}
$$
\le (n-1)^{\Omega(h_{n})}(2(h_{n}-1)+(h_{n-1}-1)+\ldots + (h_2-1)+(1-1)+1).
$$
Therefore (\ref{ththe1}) holds.\\
Suppose that the inequality (\ref{ththe1}) holds for fixed $n\ge 3$ and fixed $h,$ such that $h_1>h\ge 1$:
\begin{equation}\label{ththe3m}
s_{\le (n-1)^{\Omega( h_n)}h_n}(G)\le (n-1)^{\Omega(h_{n})}(2(h_n-1)+(h_{n-1}-1)+\ldots + (h-1)+1).
\end{equation}
Let $p$ be a prime divisor of $h_1.$
Let $H_i^*$ be a subgroup of index $p$ of a group $H_i$.
Put $h_i=ph_i^*$ and $Q=H_1^*\oplus H_2^*\oplus\ldots\oplus H_n^*.$ By inductive assumption, the inequality (\ref{ththe1}) holds for $Q$:
\begin{equation}\label{ththe3n}
s_{\le (n-1)^{\Omega( h_n^*)}h_n^*}(Q)\le 2^{\Omega(h_{n}^*)}(2(h_n^*-1)+(h_{n-1}^*-1)+\ldots + (h_1^*-1)+1).
\end{equation}
We put $s=(n-1)^{\Omega(h_{n})}(2(h_n-1)+(h_{n-1}-1)+\ldots + (h_1-1)+1)$ and let $S=g_1g_2\cdot\ldots\cdot g_s$ be a sequence of $G.$\\
We shall prove that there exists a subsequence of $S$ with length smaller or equal to $(n-1)^{\Omega( h_{n})}h_n$ and zero sum. Let $b_i=g_i+Q,\,1\le i\le s,\,$ be the sequence of $G/Q.$
The quotient group $G/Q$ is isomorphic to $C_p^n$ and
\begin{equation}
\begin{split}
s=(n-1)^{\Omega(h_{n}^*)+1}\Big(p\big(2(h_n^*-1)+\ldots + (h_1^*-1)+1\big)+n(p-1)\Big)\ge \\
p(n-1)^{\Omega(h_{n}^*)+1}\big(2(h_n^*-1)+\ldots + (h_1^*-1)+1\big)+(n-1)n(p-1)\ge \\
(n-1)p\Big((n-1)^{\Omega(h_{n}^*)}\big(2(h_n^*-1)+\ldots + (h_1^*-1)+1\big)\Big)+2p-n.
\end{split}
\end{equation}
Therefore, by Lemma \ref{th8b2} there exist pairwise disjoint $I_j \subseteq [1,s]$ with $|I_j|\le (n-1)p$ and
\begin{equation}\label{wygn}
j\le j_0=(n-1)^{\Omega(h_{n}^*)}(2(h_n^*-1)+\ldots + (h_1^*-1)+1)
\end{equation}
such that sequence $\prod\limits_{i\in I_j}b_i$ has a zero sum in $G/Q.$\\
In another words $\sigma(\prod\limits_{i\in I_j}g_i)=\sum\limits_{i\in I_j}g_i\in Q.$
By induction (\ref{ththe1}) for $Q$ there exists $J\subseteq[1,j_0]$ with $|J|\le (n-1)^{\Omega( h_n^*)}h_n^*$ such that
$\sum\limits_{j\in J}\sigma(\prod\limits_{i\in I_j}g_i)=0.$\\
Thus, we obtain zero sum subsequence of $S$ in $G$ of length not exceeding $\sum\limits_{j\in J}|I_j|\le(n-1)^{\Omega(h_n^*)}h_n^*\cdot (n-1)p=(n-1)^{\Omega(h_n)}h_n.$

\end{proof}

\section{Davenport's constant and smooth numbers}
First, we recall the notion of a smooth number. Let $F=\{q_1,q_2,\ldots,q_r\}$ be a subset of positive integers.
A~positive integer $k$ is said to be smooth over a~set $F$ if $k=q_1^{e_1}\cdot q_2^{e_2}\cdot\ldots\cdot q_r^{e_r}$ where $e_i$ are non-negative integers.
\begin{rem}Let $n\in\mathbb{N}$.
Each smooth number over a~set $\{{q_1}^{n}, {q_2}^{n},\ldots, {q_r}^{n}\}$ is an~$n$-th power of a~suitable smooth number over the set
$\{q_1, q_2,\ldots, q_r\}.$
\end{rem}
\begin{defin}
Let $\{p_1,p_2,\ldots,p_r\}$ be a set of distinct prime numbers. By $c(n_1,n_2,\ldots, n_r),$ we denote the smallest $t\in \mathbb{N}$ such that every sequence $M$ of smooth numbers over a set $\{p_1,p_2,\ldots,p_r\}$, of length $t$ has a~non empty subsequence $N$ such that the product of all the terms of $N$ is a~smooth number over a~set $\{{p_1}^{n_1}, {p_2}^{n_2},\ldots, {p_r}^{n_r}\}.$
\end{defin}
\begin{thm}
Let $n_1,n_2,\ldots,n_r$ be integers such that $1<n_1|n_2|\ldots|n_r.$ Then:
\begin{equation}\label{eCD0}
c(n_1,n_2,\ldots, n_r)= D(\mathbb{Z}_{n_1}\oplus \mathbb{Z}_{n_2}\oplus \ldots \oplus \mathbb{Z}_{n_r}).
\end{equation}
\end{thm}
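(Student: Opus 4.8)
The plan is to reduce the multiplicative statement about smooth numbers to the additive zero-sum problem over $G=\mathbb{Z}_{n_1}\oplus\mathbb{Z}_{n_2}\oplus\ldots\oplus\mathbb{Z}_{n_r}$ by passing to exponent vectors modulo the $n_i$, and then simply to invoke the definition of $D(G)$. Writing $v_{p_i}(m)$ for the exponent of $p_i$ in a smooth number $m=p_1^{e_1}\cdot\ldots\cdot p_r^{e_r}$ over $\{p_1,\ldots,p_r\}$, I would define
$$
\phi(m)=\big(v_{p_1}(m)\bmod n_1,\ldots,v_{p_r}(m)\bmod n_r\big)\in G.
$$
Because $v_{p_i}(mm')=v_{p_i}(m)+v_{p_i}(m')$, the map $\phi$ is a homomorphism from the multiplicative monoid of smooth numbers over $\{p_1,\ldots,p_r\}$ into $(G,+)$, and it is surjective, since any $(a_1,\ldots,a_r)$ with $0\le a_i<n_i$ is the image of $p_1^{a_1}\cdot\ldots\cdot p_r^{a_r}$.

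The key observation is that a positive integer $k=p_1^{f_1}\cdot\ldots\cdot p_r^{f_r}$ is smooth over $\{p_1^{n_1},\ldots,p_r^{n_r}\}$ if and only if $n_i\mid f_i$ for every $i$, i.e.\ if and only if $\phi(k)=0$ in $G$. Consequently, for a sequence $M=m_1\cdot\ldots\cdot m_t$ of smooth numbers and a non-empty subsequence $N\mid M$, the product of the terms of $N$ is smooth over $\{p_1^{n_1},\ldots,p_r^{n_r}\}$ exactly when $\sigma(\phi(N))=0$, where $\phi(N)=\prod_{m\in N}\phi(m)$ denotes the image sequence over $G$. Thus ``$N$ has smooth product'' and ``$\phi(N)$ is a zero-sum subsequence'' are the same condition.

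From this correspondence both inequalities follow. For $c(n_1,\ldots,n_r)\le D(G)$: any sequence of $D(G)$ smooth numbers maps under $\phi$ to a sequence of length $D(G)$ in $G$, which by the definition of the Davenport constant contains a non-empty zero-sum subsequence; the associated subsequence of smooth numbers then has product smooth over $\{p_1^{n_1},\ldots,p_r^{n_r}\}$. For the reverse inequality, I would take a zero-sum free sequence in $G$ of maximal length $D(G)-1$ and, using surjectivity of $\phi$, lift each of its terms to a preimage, obtaining a sequence of $D(G)-1$ smooth numbers no non-empty subsequence of which has zero image sum; hence none has product smooth over the $n_i$-th powers, so $c(n_1,\ldots,n_r)>D(G)-1$. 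I do not expect a genuine obstacle here: the entire content is this exponent-vector dictionary, and the only points requiring care are checking that $\phi$ is compatible with taking products of subsequences and that surjectivity indeed furnishes the lifts needed for the lower bound.
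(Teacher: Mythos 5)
Your proposal is correct and follows essentially the same route as the paper: both reduce the problem via the exponent-vector homomorphism $\Phi(m)=\big([v_{p_1}(m)]_{n_1},\ldots,[v_{p_r}(m)]_{n_r}\big)$ and the observation that a product is smooth over $\{p_1^{n_1},\ldots,p_r^{n_r}\}$ exactly when the image sequence has zero sum. The only cosmetic difference is in the lower bound, where you lift a maximal zero-sum free sequence to get $c>D(G)-1$ while the paper lifts an arbitrary sequence of length $c$ and deduces $D(G)\le c$; these are the same argument read in opposite directions.
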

\begin{proof} It follows on the same lines as proof of \cite[Theorem 1.6.]{ChMG}. First we will prove that
\begin{equation}\label{eCD}
c(n_1,n_2,\ldots, n_r)\le D(\mathbb{Z}_{n_1}\oplus \mathbb{Z}_{n_2}\oplus \ldots \oplus \mathbb{Z}_{n_r}).
\end{equation}
 We put $l=D(\mathbb{Z}_{n_1}\oplus \mathbb{Z}_{n_2}\oplus \ldots \oplus \mathbb{Z}_{n_r}).$ Let $M=(m_1,m_2,\ldots,m_l)$ be a~sequence of smooth numbers with respect to $F=\{p_1,p_2,\ldots,p_r\}$.\\
For all $i\in[1,n]$, we have $m_i=p_1^{e_{i,1}}p_2^{e_{i,2}}\cdot\ldots\cdot p_r^{e_{i,1}}$ where $e_{i,j}$ are non-negative integers. We associate each $m_i$ with $a_i\in \mathbb{Z}_{n_1}\oplus \mathbb{Z}_{n_2}\oplus \ldots \oplus \mathbb{Z}_{n_r}$ under the homomorphism:\begin{equation}\label{assoc0}
\begin{split}
\Phi:\{p_1^{e_1}\cdot p_2^{e_2}\cdot\ldots\cdot p_r^{e_r}:\,\,e_i\ge 0,\, e_i\in\mathbb{Z} \}\to\mathbb{Z}_{n_1}\oplus \mathbb{Z}_{n_2}\oplus \ldots \oplus \mathbb{Z}_{n_r},\\
  \Phi(p_1^{e_1}\cdot p_2^{e_2}\cdot\ldots\cdot p_r^{e_r})=\big([e_{1}]_{{}_{n_1}},\,[e_{2}]_{{}_{n_2}},\,\ldots,\,[e_{r}]_{{}_{n_r}}\,\big).
\end{split}
\end{equation}
Hence:
\begin{equation}\label{assoc}
  \Phi(m_i)=\big([e_{i,1}]_{{}_{n_1}},\,[e_{i,2}]_{{}_{n_2}},\,\ldots,\,[e_{i,r}]_{{}_{n_r}}\,\big).
\end{equation}
Thus, we get a sequence $S=a_1a_2\cdot\ldots\cdot a_l$ of elements of the group\\
$\mathbb{Z}_{n_1}\oplus \mathbb{Z}_{n_2}\oplus \ldots \oplus \mathbb{Z}_{n_r}$ of length $l=D(\mathbb{Z}_{n_1}\oplus \mathbb{Z}_{n_2}\oplus \ldots \oplus \mathbb{Z}_{n_r}).$
Therefore, there exists a non-empty zero sum subsequence $T$ of $S$ in $\mathbb{Z}_{n_1}\oplus \mathbb{Z}_{n_2}\oplus \ldots \oplus \mathbb{Z}_{n_r},$ and let $T=a_{j_1}a_{j_2}\cdot\ldots\cdot a_{j_t}.$ That is
\begin{equation}\label{assoc2}
\sum\limits_{i=1}^{t} e_{j_i,k}\equiv 0 \pmod {n_k},
\end{equation}
where $k\in[1,r].$
Consider the subsequence $N$ of $M$ corresponding to $T.$\\
We have $N=(m_{j_1},m_{j_2},\ldots,m_{j_t})$ and by equation \eqref{assoc2}, we get
\begin{equation}\label{assoc3}
  \prod\limits_{m\in N}^{} m=\prod\limits_{k=1}^r {p_k}^{{}^{\sum\limits_{i=1}^t e_{j_i,k}}}=\prod\limits_{k=1}^r\left({p_k}^{n_k}\right)^{l_k},
\end{equation}
for some integers $l_k\ge 0.$
Hence $\prod\limits_{m\in N}^{} m$ is a smooth number over a~set $\{{p_1}^{n_1}, {p_2}^{n_2},\ldots, {p_r}^{n_r}\}$.
By definition of $c(n_1,n_2,\ldots,n_r)$ we get inequality~(\ref{eCD}).
To prove
\begin{equation}\label{eCD2}
c(n_1,n_2,\ldots, n_r)\ge D(\mathbb{Z}_{n_1}\oplus \mathbb{Z}_{n_2}\oplus \ldots \oplus \mathbb{Z}_{n_r}).
\end{equation}
Let $l=c(n_1,n_2,\ldots, n_r)$ and $S=a_1a_2\cdot\ldots\cdot a_l$ be a sequence of elements of $\mathbb{Z}_{n_1}\oplus \mathbb{Z}_{n_2}\oplus \ldots \oplus \mathbb{Z}_{n_r}$ of length $l,$ where $a_i=\big([e_{i,1}]_{{}_{n_1}},\,[e_{i,2}]_{{}_{n_2}},\,\ldots,\,[e_{i,r}]_{{}_{n_r}}\,\big).$
We put $m_i=p_1^{e_{i,1}}\cdot p_2^{e_{i,2}}\cdot\ldots \cdot p_r^{e_{i,r}}.$ The sequence $M=(m_1,m_2,\ldots ,m_l)$ of integers is a sequence of smooth numbers over a set $F.$\\ By definition of $l=c(n_1,n_2,\ldots, n_r),$ there exists a non-empty subsequence $N=(m_{j_1},m_{j_2},\ldots,m_{j_t})$ of $M$ such that:
\begin{equation}
\prod\limits_{m\in N}^{} m=\prod\limits_{k=1}^r\left({p_k}^{n_k}\right)^{l_k},
\end{equation}
for some integers $l_k\ge 0.$ The subsequence $T$ of $S$ corresponding to $N$ will sum up to the identity in $\mathbb{Z}_{n_1}\oplus \mathbb{Z}_{n_2}\oplus \ldots \oplus \mathbb{Z}_{n_r}.$ Therefore (\ref{eCD2}) holds and we obtain (\ref{eCD0}).
\end{proof}
\section{Future work and the non-Abelian case}
The constant $E(G)$ has received a lot of attention in the last $10$ years. For example, one direction went towards weighted zero-sum problems, 
see \cite[Chapter 16]{GRY}. The second direction went towards non-Abelian groups.\\
Let $G$ be any additive finite group. Let $S = (a_1,\ldots,a_n)$ be a sequence over~$G$. 
We say that the sequence $S$ is a zero-sum sequence if there exists a permutation 
$\sigma:[1,n]\to[1,n]$ such that $0=a_{\sigma(1)}+\ldots+ a_{\sigma(n)}.$ 
For a subset $I\subset \mathbb{N},$ let $s_L(G)$ denote the smallest $t\in\mathbb{N}\cup\{0,\infty\}$ 
such that every sequence $S$ over $G$ of length $|S|\ge t$ has a~zero-sum subsequence $S'$ of length $|S'|\in L.$ 
The constants $D(G) := s_\mathbb{N}(G)$ and $E(G) := s_{\{|G|\}}(G)$ are classical invariants in zero-sum theory 
(independently of whether $G$ is Abelian or not). For a given finite group $G$, 
let ${\mathsf d}(G)d(G)$ be the maximal length of a~zero-sum free sequence over $G.$ We call ${\mathsf d}(G)$ the small Davenport constant.
\begin{rem}
In the Abelian case ${\mathsf d}(G)=D(G)-1$. Note that in non-Abelian groups ${\mathsf d}(G)$ can be
strictly smaller than $D(G)-1$, see \cite[Chapter 2]{RG}.
\end{rem}
\begin{thm}\label{zsml12b}
If $G$ is a~finite group of order $|G|$, then
\begin{equation}\label{nac}
{\mathsf d}(G)+m|G|\le {\mathsf E}_m(G)\le {\mathsf E}(G)+(m-1)|G|.
\end{equation}
\end{thm}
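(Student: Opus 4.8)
The plan is to mirror the two–sided argument behind Theorem~\ref{zsml1}, but now with respect to the non-abelian definition of a zero-sum sequence recalled above, where ``zero-sum'' means the existence of a permutation whose ordered sum is the identity~$0$. The only genuinely new input is a bookkeeping fact about deleting identity terms from such a permuted sum.

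For the lower bound ${\mathsf d}(G)+m|G|\le {\mathsf E}_m(G)$, I would exhibit a long sequence failing the required decomposition. First I would pick a zero-sum free sequence $S_0=a_1\cdot\ldots\cdot a_{{\mathsf d}(G)}$ of maximal length ${\mathsf d}(G)$; note that no $a_i$ equals $0$, since a single $0$ is already a zero-sum subsequence. Then I would form
$$
T=a_1\cdot\ldots\cdot a_{{\mathsf d}(G)}\cdot\underbrace{0\cdot\ldots\cdot 0}_{m|G|-1},
$$
so that $|T|={\mathsf d}(G)+m|G|-1$. Suppose for contradiction that $T$ contains $m$ disjoint zero-sum subsequences $T_1,\ldots,T_m$ with $|T_i|=|G|$. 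Their total length is $m|G|$, while $T$ has only $m|G|-1$ identity terms, so the $T_i$ jointly use at least one term coming from $S_0$; fix an index $i$ for which $T_i$ uses such a term. Since $T_i$ is zero-sum, I would delete from a summing permutation of $T_i$ every term equal to $0$: this does not change the value of the ordered sum, so the surviving terms — which are exactly the terms of $T_i$ drawn from $S_0$ — still sum to $0$ and form a \emph{non-empty} zero-sum subsequence of $S_0$, contradicting its zero-sum freeness. Hence ${\mathsf E}_m(G)>{\mathsf d}(G)+m|G|-1$.

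For the upper bound ${\mathsf E}_m(G)\le {\mathsf E}(G)+(m-1)|G|$, I would argue by sequential extraction exactly as in the second half of the proof of Theorem~\ref{zsml1}. If $|S|\ge {\mathsf E}(G)+(m-1)|G|$, then since $|S|\ge {\mathsf E}(G)$ there is a zero-sum subsequence $S_1$ with $|S_1|=|G|$; removing it leaves a sequence of length $\ge {\mathsf E}(G)+(m-2)|G|$, and iterating $m$ times yields disjoint zero-sum subsequences $S_1,\ldots,S_m$ each of length $|G|$. This gives ${\mathsf E}_m(G)\le {\mathsf E}(G)+(m-1)|G|$; if ${\mathsf E}(G)=\infty$ the inequality is vacuous, so the extraction is invoked only in the finite case.

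The counting of lengths and the iterated extraction transfer verbatim from the abelian case and are routine. The one step I expect to require care is the deletion of identity terms inside a non-abelian zero-sum: one must check that removing the terms equal to $0$ from a permutation realizing $a_{\sigma(1)}+\ldots+a_{\sigma(|G|)}=0$ produces an arrangement of the remaining terms that still sums to $0$. This holds because $0$ can be inserted at, or removed from, any position of a product without altering it, and it is precisely this observation that replaces the use of commutativity in the abelian argument.
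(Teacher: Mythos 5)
Your proposal is correct and follows essentially the same two-sided argument as the paper: the same extremal sequence $T$ (a maximal zero-sum free sequence padded with $m|G|-1$ identity terms) for the lower bound, and the same sequential extraction for the upper bound. The only difference is that you spell out the step the paper merely asserts with ``we observe'' --- namely, why $T$ admits no such decomposition, via the counting of identity terms and the deletion of $0$'s from a permuted product in the non-abelian setting --- which is a welcome clarification but not a different method.
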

\begin{proof}
Let $S=(g_1,g_2,\ldots ,g_{{}_{d(G)}})$ be a~sequence of $d(G)$ non-zero elements in $G.$
Using the definition of $d(G)$, we may assume that $S$ does not contain any non-empty subsequence $S'$ such that $\sigma(S')=0$.
We put
\begin{equation}
T=(a_1,a_2,\ldots ,a_{{}_{d(G)}}, 0,\ldots , 0),
\end{equation}
where $\nu_0(T)=m|G|-1.$\\
We observe that the sequence $T$ not contain $m$ disjoint non-empty subsequences $T_1,T_2,\ldots,T_m$ of $T$ such that $\sigma(T_i)=0$ and $|T_i|=|G|$ for\\ $i\in[1,m].$ This implies that ${\mathsf E}_m(G)>{\mathsf d}(G)+m|G|-1.$ Hence,
\begin{equation}
{\mathsf E}_m(G)\ge {\mathsf d}(G)+m|G|.
\end{equation}
On the other hand, if $S$ is any sequence over $G$ such that $|S|\ge {\mathsf E}(G)+ (m-1)|G|,$ then one can sequentially extract at least $m$ disjoint subsequences $S_1,\ldots,S_m$ of $S,$ such that $\sigma(S_i)=0$ in $G$ and $|S_i|=|G|.$ Thus,
\begin{equation}
{\mathsf E}_m(G)\le {\mathsf E}(G)+(m-1)|G|.
\end{equation}
\end{proof}
\begin{cor}
If $m\to\infty,$ then ${\mathsf E}_m(G)\sim m|G|.$
\end{cor}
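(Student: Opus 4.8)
The plan is to read off the asymptotics directly from the two-sided bound established in Theorem~\ref{zsml12b}, since the quantities bounding $\mathsf{E}_m(G)$ from both sides have the same leading term $m|G|$. The key observation is that $\mathsf{d}(G)$, $\mathsf{E}(G)$ and $|G|$ are all fixed constants independent of $m$, so the difference between the upper and lower bounds in \eqref{nac} is $O(1)$ as $m\to\infty$, whereas the main term grows linearly in $m$.

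Concretely, I would start from \eqref{nac}, namely ${\mathsf d}(G)+m|G|\le {\mathsf E}_m(G)\le {\mathsf E}(G)+(m-1)|G|$, and divide every term by $m|G|$ to obtain
\begin{equation}
\frac{{\mathsf d}(G)}{m|G|}+1\le \frac{{\mathsf E}_m(G)}{m|G|}\le \frac{{\mathsf E}(G)}{m|G|}+1-\frac{1}{m}.
\end{equation}
The next step is to let $m\to\infty$: since $\mathsf{d}(G)$, $\mathsf{E}(G)$ and $|G|$ do not depend on $m$, the leftmost expression tends to $1$ and the rightmost expression also tends to $1$. By the squeeze theorem the middle quotient satisfies $\lim_{m\to\infty}\frac{{\mathsf E}_m(G)}{m|G|}=1$, which is exactly the assertion ${\mathsf E}_m(G)\sim m|G|$.

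I do not anticipate any genuine obstacle here: the entire content is packaged into Theorem~\ref{zsml12b}, and the corollary is a formal consequence of the fact that the gap between the upper and lower bounds is bounded independently of $m$. The only point worth stating explicitly is that the additive constants ${\mathsf E}(G)-|G|-\mathsf{d}(G)$ and $\mathsf{d}(G)$ are absorbed into lower-order terms once one normalises by the growing factor $m|G|$, so that the asymptotic equivalence $\sim$ (ratio tending to $1$) holds for every finite group $G$, abelian or not.
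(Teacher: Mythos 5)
Your proposal is correct and is exactly the paper's argument: the author likewise derives the corollary by dividing the two-sided bound \eqref{nac} by the growing factor ($m$ in the paper, $m|G|$ in your write-up) and squeezing, since $\mathsf{d}(G)$, $\mathsf{E}(G)$ and $|G|$ are constants independent of $m$. No gaps.
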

\begin{proof}
If $m\to\infty,$ then $\tfrac{{\mathsf E}_m(G}{m}\to |G|$ by the inequality \eqref{nac}.
\end{proof}
We now give an application of theorem \ref{zsml12b}. 
The formula ${\mathsf E}(G)={\mathsf d}(G)+|G|$ was proved for all finite Abelian groups and for some classes of finite non-Abelian groups (see equation \eqref{ED} and \cite{BAS, DH, DH2, OHZ}).
Thus 
$${\mathsf E}_m(G)={\mathsf d}(G)+m|G|$$
holds  for finite groups in the following classes: Abelian groups, nilpotent groups, groups in the form $C_m\ltimes_\varphi C_{mn},$ 
where $m,n\in\mathbb{N},$ dihedral and dicyclic groups and all non-Abelian groups of order $pq$ with $p$ and $q$ prime.
Therefore, the the following conjecture can be proposed:
\begin{hip}
(See \cite[Conjecture 2]{BAS} For any finite group $G,$ we have equation
$${\mathsf E}_m(G)={\mathsf d}(G)+m|G|.$$
\end{hip}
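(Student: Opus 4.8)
The plan is to reduce the conjecture to its case $m=1$. Theorem~\ref{zsml12b} already supplies the two-sided estimate
$$
{\mathsf d}(G)+m|G|\le {\mathsf E}_m(G)\le {\mathsf E}(G)+(m-1)|G|,
$$
and its two ends coincide exactly when ${\mathsf E}(G)={\mathsf d}(G)+|G|$. Hence the moment the single equality ${\mathsf E}(G)={\mathsf d}(G)+|G|$ is available, the upper bound in \eqref{nac} collapses onto the lower bound and one gets ${\mathsf E}_m(G)={\mathsf d}(G)+m|G|$ for every $m\ge 1$ simultaneously, with no further induction on $m$ needed. Conversely, specialising the conjecture to $m=1$ returns precisely ${\mathsf E}(G)={\mathsf d}(G)+|G|$. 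So the entire conjecture is equivalent to this one instance, and the first step of any proof should be to record this equivalence.

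It then remains to prove ${\mathsf E}(G)={\mathsf d}(G)+|G|$ for an arbitrary finite group. Here the lower bound ${\mathsf E}(G)\ge {\mathsf d}(G)+|G|$ is already the case $m=1$ of Theorem~\ref{zsml12b}, obtained from the padded sequence $T$ formed by a maximal zero-sum free sequence followed by $|G|-1$ copies of the identity. For abelian $G$ the reverse inequality is Gao's relation \eqref{ED}, and for nilpotent groups and the other families listed above it follows from \cite{BAS, DH, DH2, OHZ}. The open content is therefore the upper bound ${\mathsf E}(G)\le {\mathsf d}(G)+|G|$ in full generality: every sequence over $G$ of length ${\mathsf d}(G)+|G|$ contains a zero-sum subsequence of length exactly $|G|$.

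To approach this upper bound I would induct along a composition series, peeling off a normal subgroup $N\trianglelefteq G$ with $|G|=|N|\cdot|G/N|$. Given a sequence $S$ of length ${\mathsf d}(G)+|G|$, the strategy is to pass to $G/N$, extract there a controlled number of pairwise disjoint zero-sum blocks $\prod_{i\in I_j}(g_i+N)$, each of which pushes $\sum_{i\in I_j}g_i$ into $N$ for a suitable ordering, and then to recombine the resulting family of block-sums inside $N$ by means of ${\mathsf E}(N)$, in the spirit of the quotient-and-lift arguments used in Lemma~\ref{th7} and Theorem~\ref{thth}. If the block lengths can be tuned so that their total is exactly $|G|$, this yields the desired zero-sum subsequence of length $|G|$.

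The hard part is the non-commutativity, which is exactly what distinguishes this from the abelian case. In a non-abelian group the sum of a subsequence is defined only relative to the permutation realising the zero sum, so a zero sum in $G/N$ need not lift to a subsequence whose $G$-sum lies in $N$ under one consistent ordering, and there is no canonical sequence of block-sums over $N$ on which to run ${\mathsf E}(N)$. Forcing the total length to be \emph{exactly} $|G|$ rather than at most $|G|$ is the delicate point, since the length must factor compatibly through the composition series while the orderings chosen at successive levels have to be made to cohere. Resolving this amounts to a fully non-abelian analogue of Gao's theorem, which is why the statement is still only a conjecture and why at present only the nilpotent and a handful of small families can be handled.
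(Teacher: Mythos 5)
The statement you were asked to prove is labelled as a conjecture in the paper, and the paper supplies no proof of it: it only records that Theorem~\ref{zsml12b} reduces the full statement to the single equality ${\mathsf E}(G)={\mathsf d}(G)+|G|$, and that this equality is known for abelian, nilpotent, $C_m\ltimes_\varphi C_{mn}$, dihedral, dicyclic, and order-$pq$ groups, whence ${\mathsf E}_m(G)={\mathsf d}(G)+m|G|$ for those classes. Your first paragraph reproduces exactly this reduction, and it is correct: the sandwich ${\mathsf d}(G)+m|G|\le {\mathsf E}_m(G)\le {\mathsf E}(G)+(m-1)|G|$ pinches to an equality for all $m$ precisely when ${\mathsf E}(G)={\mathsf d}(G)+|G|$, and conversely $m=1$ recovers that identity, so the conjecture is equivalent to its $m=1$ instance. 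You also correctly locate the genuinely open content in the upper bound ${\mathsf E}(G)\le {\mathsf d}(G)+|G|$ for general finite $G$.

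Your proposed attack on that upper bound (induction along a composition series, extracting zero-sum blocks in $G/N$ and recombining the block sums in $N$) is the natural transcription of the quotient-and-lift arguments of Lemma~\ref{th7} and Theorem~\ref{thth}, and your diagnosis of why it breaks is accurate: in a non-abelian group a zero-sum subsequence is zero-sum only with respect to some ordering, so the ``block sums'' are not well-defined elements of $N$, and forcing the total length to be exactly $|G|$ rather than at most $|G|$ requires a compatibility between the orderings at successive levels that no one knows how to arrange. So you have not proved the statement, but no proof exists; what you have done is correctly identify the reduction the paper relies on, the cases in which it closes, and the precise obstruction in the remaining cases. That is the most that can honestly be said about this conjecture, and it agrees with the paper.
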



\normalsize \baselineskip=17pt

Maciej Zakarczemny\\
Institute of Mathematics\\
Cracow University of Technology\\
Warszawska 24\\
31--155 Krak\'ow, Poland\\
E-mail: mzakarczemny@pk.edu.pl

\end{document}